\newtheorem{theorem}{Theorem}[section]
\newtheorem{remark}[theorem]{Remark}
\newtheorem{proposition}[theorem]{Proposition}
\newtheorem{lemma}[theorem]{Lemma}
\newcommand{\R}{{\mathbb R}}
\begin{document}

\title{\Large The asymptotic distribution of the scaled remainder for pseudo golden ratio expansions of a continuous random variable}

\author{I. Herbst$^1$, J. M{\o}ller$^2$, A.M. Svane$^2$}
\date{%
    $1$ Department of Mathematics, University of Virginia, Charlottesville, Va, 22903, U.S.A.\\%
    $2$ Department of Mathematical Sciences, Aalborg University, Aalborg \O, Denmark \\%
    ${}$\\
    \today}

\maketitle
\begin{abstract}
    Let $X=\sum_{k=1}^\infty X_k \beta^{-k}$ be the (greedy) base-$\beta$ expansion of a continuous random variable $X$ on the unit interval where $\beta$ is the positive solution to $\beta^n = 1 + \beta + \cdots + \beta^{n-1}$ for an integer $n\ge 2$ (i.e., $\beta$ is a generalization of the golden mean corresponding to $n=2$).
  We study the asymptotic distribution and convergence rate of the scaled remainder $\sum_{k=1}^\infty X_{m+k} \beta^{-k}$ when $m$ tends to infinity. 
\end{abstract}

\section{Introduction and main results}\label{s:1}

In this paper we consider  an absolutely continuous random variable $X$ with probability density function (PDF) $f$ concentrated on $[0,1)$. For  $\beta>1$ we let $X = \sum_{k=1}^\infty X_k\beta^{-k}$ be its  base $\beta$-expansion with digits $X_k$ defined recursively by $X_1=\lfloor \beta X \rfloor$ and $X_k=\lfloor \beta^k(X- \sum_{i=1}^{k-1} X_i\beta^{-i} ) \rfloor$ for $k\ge2$ where  $\lfloor x \rfloor$ denotes the integer part of $x$. We are interested in the distribution of the scaled remainder $\sum_{k=1}^\infty X_{k+m}\beta^{-k}$ when the first $m$ digits are removed. In the case where $\beta $ is an integer, the limiting distribution of the scaled remainder and the speed at which it converges to this limit has been discussed in \cite {HMSI}. In this paper, we consider the situation where $\beta$ is 
the  positive solution %(which can be shown to be unique) 
to 
\begin{equation}\label{betan}
	\beta^n = 1 + \beta +\cdots + \beta^{n-1}
\end{equation}
for a given integer $n\ge 2$.  The golden mean
corresponds to the case $n=2$ in \eqref{betan}. The study of  $\beta$-expansions when $\beta$ is the golden mean is a classical topic in metric number theory \cite{Fritz}. For $n\ge 2$ the positive solution $\beta=\beta_n$ to \eqref{betan} is called a pseudo golden mean and is an example of a Parry number \cite{Parry1960}.

Let us introduce some notation before stating the main result of this paper.
Given $x\in [0,1)$ and $\beta>1$ let 
$$t(x) = \beta x - \lfloor \beta x \rfloor.$$
%where $ \lfloor \beta x \rfloor $ is the integer part of $\beta x$. 
Define $t^0(x) = x$ and for $m\in \mathbb{N}$, where $\mathbb{N}$ denotes the natural numbers $1,2,...$, let $t^m$ be the composition of $t$ with itself $m$ times. Define the digits in base $\beta$ as $x_k\coloneqq \lfloor\beta t^{k-1}(x)\rfloor$. %One can show (see Appendix~\ref{app:A}) 
It is well-known that we have the $\beta$-expansion 
\begin{equation} \label{beta expansion}
	x = \sum_{k=1}^\infty x_k\beta^{-k}
\end{equation} 
and since $\beta$-expansions are not unique this is sometimes referred to as the greedy $\beta$-expansion, see e.g.~\cite{greedy}. Note that 
$0\le t(x)<1$ and $0\le \beta^{-m}t^m(x) = x - \sum_{k=1}^m x_k\beta^{-k} < \beta^{-m}$. 

As shown in Appendix~B, % \ref{app:B},  
there is a unique positive solution to \eqref{betan} and this number is strictly between 1 and 2. Henceforth, let $\beta$ denote this number. It follows that $x_k \in \{0,1\}$ for $k=1,2,...$. 
Not all sequences $(x_1,x_2,...)\in\{0,1\}^{\mathbb N}$ give rise to a $\beta$-expansion of a number in $[0,1)$,  for example $\sum_{k=1}^\infty \beta^{-k} = (\beta-1)^{-1} > 1$. Moreover, due to the relation \eqref{betan}, expansions of the form \eqref{beta expansion} are not necessarily unique.
It follows from Theorem 3, page 406 of \cite{Parry1960} %Since we restrict to $\beta$ given by the unique positive solution to {\color{black} \eqref{betan}}, 
that the sequences that arise as $x_k\coloneqq \lfloor\beta t^{k-1}(x)\rfloor$ for some $x\in[0,1)$ are exactly those having no more than $n-1$ $x_k$'s in succession  equal to $1$ and not ending with the sequence $(\ldots,0,1,\ldots,1,,0,1,\ldots,1,\ldots)$ where each sequence of 1's has $n-1$ elements. 
We refer to these conditions as the standard restrictions on the  $x_k$'s.
A self-contained proof  is given in Appendix~A. %\ref{app:A}. 

{ 
	The main object of our interest is the scaled remainder 
	\begin{equation}\label{e:main}
		t^m(X) = \beta^m\sum_{k=m+1}^\infty X_k\beta^{-k} = \sum_{k=1}^\infty X_{k+m}\beta^{-k}
	\end{equation} 
	for $m\in \mathbb{N}$. Denote by $\mathcal {B}$ the Borel subsets of  $[0,1)$.  Then $t$ induces a transformation $T$  on PDF's concentrated on $[0,1)$ with $Tf$ characterized by $P(t(X) \in B) = \int_B Tf(x)\,\mathrm dx$ for all $B \in \mathcal {B}$ where $X$ is a random variable having  $f$ as PDF. %, see Section \ref{sec:tm}.  
	Then  $ f_m\coloneqq T^m f$ for $m\in \mathbb{N}$ is the PDF of $t^m(X)$, i.e.\ with the definition $P_m (B) \coloneqq P(t^m(X) \in B) $ we have  $P_m(B) =\int_B f_m(x)\,\mathrm dx$. In \cite{Renyi},  R{\'e}nyi showed the existence of a unique PDF $f_\beta$ such that $T f_\beta =f_\beta$.   A formula for  $f_\beta$  (up to a constant factor) for arbitrary %\emph{arbitrary} 
	$\beta > 1$ is given in \cite{Parry1960}.  
	One may define $f_\beta$  by
	\begin{equation} \label{fbeta1}
		f_\beta(x) =  \left(\sum_{j=1}^n F_j(x)\right)D(\beta)^{-1},\quad 
		F_j = (\beta^j - \beta^{j-1} - \beta^{j-2} - \cdots - \beta)\chi_{[\beta^{-1} + \cdots + \beta^{-(j-1)}, \beta^{-1} + \cdots + \beta^{-j})
		}
	\end{equation}
	for $0\le x<1$ where $\chi_I$ is the indicator function for the interval $I$ and
	$D(\beta)$ is a constant defined so that  $\int_{[0,1)}f_\beta(x)\,\mathrm dx = 1$ (an expression for $D(\beta)$ can be found in \eqref{D} below). In \eqref{fbeta} below we give an alternative formula for $f_\beta$ which is equivalent the one in \cite{Parry1960} (as one may realize by noting that in that paper $t^j(1) = \beta^{-1}+ \cdots + \beta^{n-j}$ for $j\in\mathbb N$). 
	
	Define $P_\beta (B)\coloneqq \int_B f_\beta(x)\,\mathrm dx$ for $ B \in \mathcal{B}$.
	Our main theorem identifies $P_\beta$ as the asymptotic distribution of $t^m(X)$ when $m\to \infty$. To state the theorem, we recall that the total variation distance between two probability distributions $P_1$ and $P_2$ is defined as
	$$d_{TV}(P_1,P_2) = \sup_{B \in \mathcal{B}} |P_1(B) - P_2(B)|$$    Moreover, we use the notation  $\|h-f_\beta\|_\infty = \sup_{x\in |0,1)} |h(x) - f_\beta(x)|$. 
	Finally, we let $\lambda_2$ be the root of the polynomial $p(x) = x^n -( 1 + x + \cdots + x^{n-1})$ with the second largest absolute value among all $n$ roots.} From Appendix~B % \ref{app:B} 
	we know $|\lambda_2| < 1$.

\begin{theorem} \label{mainthm}
	Suppose $X$ is absolutely continuous with PDF $f$ concentrated on $[0,1)$.
	We have
	\begin{equation} \label{dTV}
		\lim_{m\to \infty} d_{TV}(P_m,P_\beta) = 0.   
	\end{equation}
	In addition, if $f$ is a Lipschitz function with $|f(x) - f(y)| \le L_f |x-y|$ for all $x,y \in [0,1)$,
	then for any integer $m>2n$
	\begin{equation} \label{fm-fbeta}
		\|f_m - f_\beta\|_\infty = O(|\beta^{-1}\lambda_2|^{m_2}) + O(\beta^{-m_1}) L_f 
	\end{equation}
	whenever $m_1,m_2 \in \mathbb{N}$ with $m_2 > 2n-1$, $m_1\ge n-1$ and $m_1 +m_2 = m$.
\end{theorem}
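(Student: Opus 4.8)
The plan is to reduce, after finitely many steps of the transfer operator $T$, to the finite-dimensional $T$-invariant space of functions constant on the Markov partition generated by the orbit of $1$, and to combine a spectral gap there with a quantitative smoothing estimate. First I would record that for $h\in L^\infty[0,1)$ and $m\ge1$ one has $T^m h(x)=\beta^{-m}\sum_{y\in t^{-m}(x)}h(y)$, so $T$ is positive, $\int Th=\int h$, and $T1=\beta^{-1}\bigl(1+\chi_{[0,\beta-1)}\bigr)$. Put $J_j=\bigl[\sum_{i=1}^{j-1}\beta^{-i},\sum_{i=1}^{j}\beta^{-i}\bigr)$ for $j=1,\dots,n$; since the partition points are the points $t^\ell(1)$ of the orbit of $1$, one checks $t(J_1)=[0,1)$ and $t(J_j)=J_{j-1}$ for $j\ge2$, so $V:=\Span\{\chi_{J_1},\dots,\chi_{J_n}\}$ is $T$-invariant and, in the basis $(\chi_{J_1},\dots,\chi_{J_n})$, $T|_V$ has matrix $\beta^{-1}B$ where $B$ is the $n\times n$ matrix whose first column and superdiagonal are $1$ and whose remaining entries are $0$. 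The characteristic polynomial of $B$ is exactly $p(x)=x^n-(1+x+\cdots+x^{n-1})$, so the eigenvalues of $T|_V$ are the numbers $\mu/\beta$ over roots $\mu$ of $p$: the largest, $1$, comes from $\mu=\beta$ and has eigenvector the Parry density $f_\beta$ (constant on each $J_j$ by \eqref{fbeta1}, hence in $V$), while by Appendix~B all the others have modulus at most $|\beta^{-1}\lambda_2|<1$. Because $h\mapsto\int h$ is a left eigenvector for the eigenvalue $1$ one gets the $T$-invariant splitting $V=\Span\{f_\beta\}\oplus V_0$ with $V_0=\{h\in V:\int h=0\}$, and since $p$ has only simple roots (e.g.\ $(x-1)p(x)=x^{n+1}-2x^n+1$ has no repeated root) this yields $\|(T|_{V_0})^k h\|_\infty\le C\,|\beta^{-1}\lambda_2|^k\|h\|_\infty$ with $C=C(n)$. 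I would also use two crude uniform bounds: any probability density lying in $V$ has sup-norm $\le\max_j|J_j|^{-1}=\beta^n$, and $\|T^k h\|_\infty\le\beta^n\|h\|_\infty$ for all $h$ (positivity together with $T^k1\in V$).

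The heart of the matter is a smoothing lemma: for $m\ge n-1$ and any density $f$ with Lipschitz constant $L_f$, if $g_m\in V$ takes the value $|J_j|^{-1}\int_{J_j}f_m$ on $J_j$, then $g_m$ is again a probability density (its integral is $\int f_m=1$) and $\|f_m-g_m\|_\infty\le C\beta^{-m}L_f$. To prove it, fix $x\in J_j$ and describe $t^{-m}(x)$: each preimage is $\psi_a(x)$ for an admissible length-$m$ word $a$ with $x\in t^m(\Delta(a))$, where $\Delta(a)$ is the corresponding cylinder and $\psi_a$ is affine of slope $\beta^{-m}$; the standard restrictions give $t^m(\Delta(a))=[0,t^{k(a)}(1))$ with $k(a)$ the number of trailing $1$'s of $a$, so $x\in t^m(\Delta(a))\iff k(a)\le n-j$. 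Replacing each $f(\psi_a(x))$ by the average $\beta^{m+j}\int_{\psi_a(J_j)}f$ costs at most $L_f\beta^{-m-j}$ per term, there are $N_m=O(\beta^m)$ terms, and the main term telescopes, $\beta^{-m}\sum_a\beta^{m+j}\int_{\psi_a(J_j)}f=\beta^j\int_{t^{-m}(J_j)}f=\beta^j\int_{J_j}f_m=g_m(x)$. I expect this combinatorial step — the identity $t^m(\Delta(a))=[0,t^{k(a)}(1))$ extracted from the admissibility rules, together with the uniform count $N_m=O(\beta^m)$ — to be the main obstacle, and it is where the conditions on $m_1,m_2$ in the theorem are used, to make the word combinatorics stabilise.

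To conclude the rate, write $m=m_1+m_2$ and decompose $f_{m_1}=g_{m_1}+r_{m_1}$ as in the lemma, so $\int r_{m_1}=0$ and $\|r_{m_1}\|_\infty\le C\beta^{-m_1}L_f$; since $T^{m_2}f_\beta=f_\beta$ and $g_{m_1}-f_\beta\in V_0$,
\[
f_m-f_\beta=T^{m_2}(g_{m_1}-f_\beta)+T^{m_2}r_{m_1}.
\]
As $g_{m_1}$ and $f_\beta$ are both probability densities in $V$ we have $\|g_{m_1}-f_\beta\|_\infty\le2\beta^n$, so the first term is $\le2\beta^nC|\beta^{-1}\lambda_2|^{m_2}=O(|\beta^{-1}\lambda_2|^{m_2})$ with a constant free of $L_f$ — this is exactly why $g_{m_1}$ is taken to be a genuine density — while the second is $\le\beta^n\|r_{m_1}\|_\infty=O(\beta^{-m_1})L_f$; together these give \eqref{fm-fbeta}. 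For \eqref{dTV}, use $d_{TV}(P_m,P_\beta)=\tfrac12\|f_m-f_\beta\|_1$ together with the facts that $T$ is an $L^1$-contraction and that Lipschitz densities are $L^1$-dense among densities: given $\varepsilon>0$, pick a Lipschitz density $\tilde f$ with $\|f-\tilde f\|_1<\varepsilon$, so $\|f_m-f_\beta\|_1\le\|T^m(f-\tilde f)\|_1+\|T^m\tilde f-f_\beta\|_\infty\le\varepsilon+\|T^m\tilde f-f_\beta\|_\infty$, and the last term $\to0$ by \eqref{fm-fbeta} applied to $\tilde f$ with $m_1=m_2=\lfloor m/2\rfloor$; letting $\varepsilon\downarrow0$ gives \eqref{dTV}.
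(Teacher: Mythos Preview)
Your argument is correct and complete, but it follows a genuinely different route from the paper's. The paper proves Proposition~\ref{gm-fbetaest} by a direct combinatorial computation: starting from the explicit formula \eqref{fm}, it splits the sum over $\Omega_m$ into a double sum over $J'\in\Omega_{m_1}$ and $J\in\Omega_m(J')$, uses the Lipschitz hypothesis to replace $g(L_{J,m}+\beta^{-m}x)$ by the average of $g$ over $I_{J',m_1}$ (cost $O(\beta^{-m_1})L_g$), and then counts the surviving terms via Lemma~\ref{$N_s$}, whose error term $O(|\lambda_2|^{m_2})$ produces the second $O$-term; the main term is then simplified by hand to $f_\beta$. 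Your approach is operator-theoretic: you identify the finite-dimensional $T$-invariant subspace $V=\Span\{\chi_{J_1},\dots,\chi_{J_n}\}$, observe that $T|_V$ is (conjugate to) $\beta^{-1}M^\ast$ and hence has a spectral gap $|\beta^{-1}\lambda_2|$, and then factor $f_m-f_\beta=T^{m_2}(g_{m_1}-f_\beta)+T^{m_2}r_{m_1}$ where $g_{m_1}\in V$ is the conditional expectation of $f_{m_1}$ on the partition $\{J_j\}$. Your smoothing lemma and the paper's Lipschitz step are the same idea, but applied at different points in the decomposition. What your approach buys is conceptual clarity---the matrix $M$ enters once, as the restriction of $T$ to $V$, rather than through a separate counting lemma---and in fact your bound holds for \emph{all} $m_2\ge1$, the constraint $m_2>2n-1$ being an artifact of the paper's need to apply Lemma~\ref{$N_s$} to $N_s(m_2-k)$ for $k\le n$. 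What the paper's approach buys is explicitness: every constant is tracked through the computation, and one sees exactly how $f_\beta$ emerges term by term. (The paper in fact mentions that an operator-theoretic proof of \eqref{dTV} along lines close to yours will appear elsewhere.) The $d_{TV}$ argument via $L^1$-density of Lipschitz functions and $L^1$-contractivity of $T$ is identical to the paper's.
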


\begin{remark}  Here, $m_1$ and $m_2$ should be chosen so that \eqref{fm-fbeta} is optimized once additional information is known about $|\lambda_2|$. This and the values of $\beta$ and $\|\lambda_2|$ as well as the convergence rates of $\|f_m - f_\beta\|_\infty $ in specific cases of $f$ are  investigated in Section~\ref{s:ex}.
	Instead of assuming $f$ is Lipschitz, we could assume that $f$ is only H\"{o}lder continuous with an appropriate change in the exponential rate of convergence in \eqref{fm-fbeta} depending on the H\"{o}lder exponent. 
\end{remark}

We remark that 
\begin{equation}\label{e:bound}
1>|\lambda_2| \ge \beta^{-1/(n-1)}
\end{equation}
 and so $|\lambda_2|$ approaches 1 as $n$ grows.
 For $n = 2$ or $3$ we have the equality $|\lambda_2| = \beta^{-1/(n-1)}$ (see  Appendix~B). % \ref{app:B}).  
 Thus under  the Lipschitz continuity assumption in Theorem~\ref{mainthm}, for $n=2$   
$$\|f_m - f_\beta\|_\infty = O(\beta^{-2m/3}),$$ and for $n = 3$
$$\|f_m - f_\beta\|_\infty = O(\beta^{-3m/5}),$$  while for $n>3$ we have no upper bound on $|\lambda_2|$ other than $|\lambda_2| <1$ which yields
\begin{equation}\label{eq:bound_0.5}
\|f_m - f_\beta\|_\infty = O(\beta^{-m/2}).
\end{equation}

The paper is organized as follows. Section~\ref{s:ex} discusses various numerical results in relation to Theorem~\ref{mainthm}.
The remainder of the paper
concerns the proof of Theorem \ref{mainthm}: We first determine the PDF $f_m$ in 
Section~\ref{sec:tm}. Some technical considerations necessary for the proof are given in Section~\ref{sec:Omega} before the main theorem is proved in Section~\ref{sec:proof}. (We mention that a very different proof of the limit \eqref{dTV} will be given in an upcoming paper %\cite{CHM} 
of Cornean, Herbst, and Marcelli by analyzing the operator $T$ defined above.) Finally some technical details are deferred to Appendix~A and B. 

\section{Numerical results}\label{s:ex}

To further examine the convergence rates in Theorem \ref{mainthm}, we computed the value of $\beta$ and $|\lambda_2|$ for $n=2,\ldots,10$ in Table~\ref{tab:beta}. 
It seems that for $n>3$ the lower bound in \eqref{e:bound} %inequality $|\lambda_2|\ge \beta^{-1/(n-1)}$ 
is strict. Ignoring that $m_1$ and $m_2$ are integers,
the optimal rate in Theorem \ref{mainthm} is achieved when $|\beta^{-1}\lambda_2|^{m_2}=\beta^{-m_1}$, that is,
 $m_1=tm_2$ with $t=\ln(\beta^{-1}|\lambda_2|)/\ln(\beta^{-1})=1-\ln(|\lambda_2|)/\ln(\beta)$. 
Then, since $m_1+m_2=m$, the right hand side in \eqref{fm-fbeta} becomes $O(\beta^{-mt/(1+t)})$. In Table~\ref{tab:beta} we numerically computed $t/(t+1)$ for different values of $n$. We see that it is strictly above the lower bound $0.5$ from \eqref{eq:bound_0.5}, but approaches $0.5$ when $n$ increases.

\begin{table}
\begin{tabular}{cccccccccc}
	\hline
n& 2 & 3 & 4&5&6&7&8&9&10 \\
\hline
$\beta$ & 1.618&1.839& 1.928& 1.966 &1.984& 1.992& 1.996& 1.998&1.999\\
$|\lambda_2|$ & 0.618 & 0.737 &0.818& 0.871& 0.906& 0.930& 0.947& 0.959& 0.968\\
$\beta^{-1/(n-1)}$ & 0.618 & 0.737& 0.804& 0.845 &0.872& 0.891& 0.906& 0.9170&  0.926\\
$t/(t+1)$ & 0.667 &0.600& 0.566& 0.546& 0.534 &0.525& 0.519& 0.515& 0.511\\\hline
\end{tabular}
\caption{Value of $\beta$, $|\lambda_2|$, $\beta^{-1/(n-1)}$, and $t/(1+t)$ for $n=2,3,\ldots,10$.}\label{tab:beta}
\end{table}

To further investigate the convergence rates we computed $\|f_m - f_\beta\|_\infty$ for $f(x)=\alpha x^{\alpha - 1}$, corresponding to a beta-distribution with shape parameters $\alpha\ge1 $ and $1$ ($f$ is Lipschitz if and only if $\alpha\ge1$). Figure~\ref{fig:1} shows the log error $\ln\|f_m-f_\beta\|_\infty$ when $n=2,3$ and 
$\alpha=1,2,3,4$. In each case the log error is approximately a linear function of $m$ and the corresponding regression line is shown in Figure~\ref{fig:1}. When $\alpha=2,3,4$ the slopes  of the regression lines divided by $-\ln(\beta)$ are $0.993,1.014,1.024$ for $n=2$ and $0.991,0.997,0.998$ for $n=3$. This suggests that the rate $\beta^{-mt/(1+t)}$ is not optimal and that the actual rate might be $\beta^{-m}$. The situation for $\alpha=1$ is quite different with a much faster decay, where the slope divided by $-\ln(\beta)$ is $2.000$ for $n=2$ and $1.485$ for $n=3$. Finally, since $f$ is left skewed when $\alpha>1$, we also considered results for the right skewed PDF $f(x)=2(1-x)$, however, the results are almost indistinguishable from those when $f(x)=2x$ (the case $\alpha=2$) and therefore plots similar to those in Figure~\ref{fig:1} are omitted. 

%. The exponential of the slope is now $0.382$ which equals $\beta^{-2}$ for $n=2$. For $n=3$, the exponential of the slope is $0.404$, which does not equal $\beta^{-2}$. This phenomonen has to do with the function values and derivatives of $f$ matching up at the endpoints of the interval $[0,1)$ and will be investigated further by Kasper et al (???) in an upcoming paper. Since all the considered PDF's are right skewed, we also tried with $f(x)=2(1-x)$ corresponding to a beta distribution with parameters 1 and 2, however, the plot is almost indistinguishable from the plot for $f(x)=2x$ and hence is not shown.

\begin{figure}
\includegraphics[width=\textwidth]{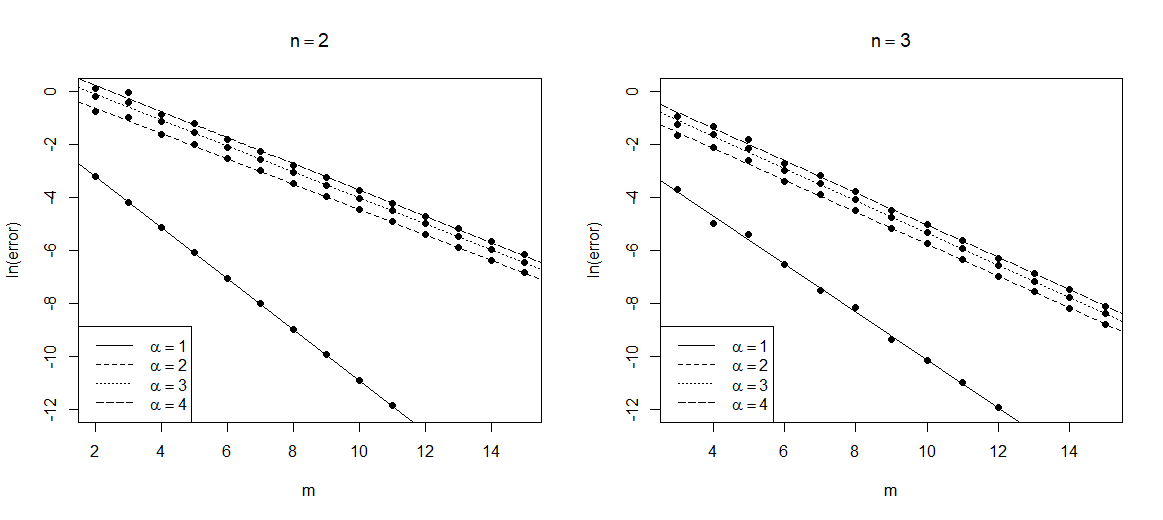}
\caption{Plot of the logarithm of the error $\|f_m - f_\beta\|_\infty$ as a function of $m$ for $\alpha=1,2,3,4$ and $n=2,3$. }\label{fig:1}
\end{figure}

\section{Formula for the PDF of $t^m(X)$}\label{sec:tm}

As we saw above  
$$t^m(X) = \sum_{k=1}^\infty X_{m+k}\beta^{-k}.$$
To determine the distribution of $t^m(X)$, we divide $[0,1)$ into intervals.
For  $m\in\mathbb N$ define 
$$\Omega_m = \{(j_1,j_2,\cdots,j_m)\,|\, j_k \in \{0,1\} \mbox{ and no more than $n-1$  $j_k$'s in succession equal 1}\}.$$
If $J=(j_1,\ldots,j_m)\in \Omega_m$ we define
$$L_{J,m} =  \sum_{k=1}^m j_k\beta^{-k}.$$
Let  $r(J)$ be the number of 1's in succession at the end of $J$. 
Then $0\le r(J) \le \min(m,n-1).$ Suppose $x = \sum_{k=1}^\infty \beta^{-k} x_k$ with $x_k = j_k$ for $k=1,...,m$ and the $x_k$'s following the standard restrictions.  Clearly $ L_{J,m}\le x$. We show in Appendix~A % \ref{app:A} 
that 
\begin{align*}
	&x<  L_{J,m}+\beta^{-(m+1)} +\beta^{-(m+2)} +\cdots +\beta^{-(m+ n-r(J))}.
\end{align*}
Setting $$K(J) = \beta^{-1} + \cdots + \beta^{-( n-r(J))},$$ 
this leads us to define the interval 
\begin{equation}\label{IJdef} I_J = I_{J,m}= [L_{J,m}, L_{J,m} + \beta^{-m}K(J)).\end{equation}

\begin{proposition} \label{prop:IJ} The following properties are satisfied.
	\begin{enumerate}
		\item[1)] If $J,J'\in \Omega_m$ then $I_J\cap I_{J'} = \emptyset$ unless $J=J'$.
		\item[2)] $\cup_{J \in \Omega_m} I_J = [0,1)$.
		\item[3)] Any $x\in [0,1)$ has a unique expansion $x = \sum_{k=1}^\infty x_k\beta^{-k}$ with the $x_k$'s obeying the standard restrictions. 
		\item[4)] For $J\in \Omega_m$, $I_J$ consists of those  $x = \sum_{k=1}^\infty x_k\beta^{-k}$ with the $x_k$'s obeying the standard restrictions and $J=(x_1,\ldots,x_m)$.
	\end{enumerate}
\end{proposition}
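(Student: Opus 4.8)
The plan is to establish 1) and 2) simultaneously by induction on $m$, realizing $\{I_{J,m}:J\in\Omega_m\}$ as a partition of $[0,1)$ produced by successive subdivision, and then to read off 3) and 4) from this partition together with the upper bound on $x$ proved in Appendix~A. The one piece of arithmetic that makes the pieces fit is the identity $\beta^{-1}+\beta^{-2}+\cdots+\beta^{-n}=1$, obtained by dividing \eqref{betan} by $\beta^n$; note in particular that $K(J)=\beta^{-1}+\cdots+\beta^{-(n-r(J))}\le 1$, with equality precisely when $r(J)=0$.

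For the base case $m=1$ we have $\Omega_1=\{(0),(1)\}$, and a direct computation gives $I_{(0)}=[0,\beta^{-1})$ and $I_{(1)}=[\beta^{-1},\beta^{-1}(1+\beta^{-1}+\cdots+\beta^{-(n-1)}))=[\beta^{-1},1)$, so these are disjoint with union $[0,1)$. For the inductive step write $J=(J^-,j_{m+1})\in\Omega_{m+1}$ with $J^-=(j_1,\ldots,j_m)$; then $J^-\in\Omega_m$, and conversely $(J^-,0)\in\Omega_{m+1}$ always while $(J^-,1)\in\Omega_{m+1}$ iff $r(J^-)\le n-2$. Using $L_{(J^-,j),m+1}=L_{J^-,m}+j\beta^{-(m+1)}$ together with the rule $r(J^-,0)=0$, $r(J^-,1)=r(J^-)+1$, one checks that
\begin{align*}
I_{(J^-,0),m+1}&=[L_{J^-,m},\,L_{J^-,m}+\beta^{-(m+1)}),\\
I_{(J^-,1),m+1}&=[L_{J^-,m}+\beta^{-(m+1)},\,L_{J^-,m}+\beta^{-(m+1)}+\cdots+\beta^{-(m+n-r(J^-))})
\end{align*}
whenever both are defined, and $I_{(J^-,0),m+1}=I_{J^-,m}$ when $r(J^-)=n-1$ (so that only the digit $0$ may be appended). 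In every case these one or two intervals are pairwise disjoint and their union is exactly $I_{J^-,m}$. Since by the inductive hypothesis the $I_{J^-,m}$, $J^-\in\Omega_m$, are pairwise disjoint with union $[0,1)$, the same follows for $\{I_{J,m+1}:J\in\Omega_{m+1}\}$; in particular $I_{J,m}\subseteq[0,1)$ for all $J$ and $m$.

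Granting 1) and 2), the rest is short. If $x=\sum_{k\ge 1}x_k\beta^{-k}$ with the $x_k$ obeying the standard restrictions, then for each $m$ the prefix $J^\ast=(x_1,\ldots,x_m)$ lies in $\Omega_m$ and satisfies $L_{J^\ast,m}\le x<L_{J^\ast,m}+\beta^{-m}K(J^\ast)$, the left inequality being immediate and the right one the Appendix~A estimate; hence $x\in I_{J^\ast,m}$. Every $x\in[0,1)$ has such an expansion (the greedy one, by the characterization in Appendix~A), which re-proves 2), and if $x$ had two of them their length-$m$ prefixes would each place $x$ in a member of the partition $\{I_{\cdot,m}\}$, so by 1) they agree for every $m$; this is 3). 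For 4), the inclusion of the right-hand set into $I_{J,m}$ is exactly what was just shown, and conversely if $x\in I_{J,m}$ then, letting $J^\ast$ be the length-$m$ prefix of the now unique standard-restriction expansion of $x$, we have $x\in I_{J,m}\cap I_{J^\ast,m}$, so $J=J^\ast$ by 1).

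The main obstacle is the bookkeeping in the inductive step of 1)--2): verifying the two displayed formulas requires carefully tracking how $r(\cdot)$, and hence $K(\cdot)$, changes when a digit is appended, and separately treating the boundary case $r(J^-)=n-1$ where appending $1$ is forbidden. Once the identity $\beta^{-1}+\cdots+\beta^{-n}=1$ and the Appendix~A bound are in hand, no further difficulty arises.
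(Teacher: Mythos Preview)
Your argument is correct, and the inductive subdivision you use for 1) and 2) is a genuinely different route from the paper's. The paper proves 4) first, via a lexicographic-order lemma: it shows that the numerical order of points in $[0,1)$ agrees with the lexicographic order of their standard-restriction expansions, and from this deduces that $x\in I_{J,m}$ is equivalent to $(x_1,\ldots,x_m)=J$; parts 1)--3) then follow from existence and uniqueness of the greedy expansion. You instead build the partition $\{I_{J,m}\}$ directly by induction on $m$, observing that each $I_{J^-,m}$ splits into $I_{(J^-,0),m+1}$ and (when $r(J^-)\le n-2$) $I_{(J^-,1),m+1}$, with the identity $\beta^{-1}+\cdots+\beta^{-n}=1$ making the endpoints match; then you recover 3) and 4) from disjointness together with the Appendix~A upper bound. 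Your approach is arguably cleaner for 1)--2), since it bypasses the lexicographic comparison entirely and makes the nested-interval structure explicit; the paper's approach has the advantage of yielding the extra information that the intervals $I_{J,m}$ are arranged on $[0,1)$ in the lexicographic order of the $J$'s. Both arguments ultimately rely on the same two facts from Appendix~A (the greedy expansion satisfies the standard restrictions, and the upper bound $x<L_{J,m}+\beta^{-m}K(J)$), so neither is more self-contained than the other.
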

\begin{proof}
	See  Appendix~A. % \ref{app:A}. 
\end{proof}

Recalling \eqref{e:main}
we have $P(t^m(X)\le x) = \sum_{J\in \Omega_m} P(t^m(X) \le x, X\in I_{J,m})$,  where  $t^m(X) \le x$ is the same as $\sum_{k=m+1}^\infty X_k\beta^{-k} \le \beta^{-m}x$, so 
\begin{equation} \label{PtmX}
	P(t^m(X) \le x, X\in I_{J,m}) = P(L_{J,m}\le X \le  L_{J,m} + \beta^{-m}x, X\in I_{J,m}).
\end{equation}
%For $m\in\mathbb N$ and $J\in \Omega_m$  let 
%$$K(J) = \beta^{-1} + \cdots + \beta^{-( n-r(J))},$$ 
%so
%$\beta^{-1} \le K(J)\le 1$. Thus
%$ I_{J,m} = [L_{J,m}, L_{J,m}+ \beta^{-m}K(J))$
%and 
From \eqref{IJdef} and \eqref{PtmX} we have for every $x\in[0,1)$ that
\begin{align*}
	P(t^m(X) \le x) =  \sum_{J\in \Omega_m}[&\chi_{[0,K(J))}(x) P(L_{J,m} \le X \le L_{J,m} +\beta^{-m} x)\\
	&+\chi_{[K(J),1]}(x)P(L_{J,m} \le X \le L_{J,m} +\beta^{-m} K(J))].
\end{align*}
It follows that $t^m(X)$ has a PDF given by
\begin{equation} \label{fm}
	T^mf(x)=f_m(x) = \beta^{-m} \sum_{J\in \Omega_m} \chi_{[0,K(J))}(x) f(L_{J,m} + \beta^{-m}x).
\end{equation}

\section{On the cardinality of a certain set %The number of intervals $J\in \Omega_m$ assuming a given value of $r(J)$ %Cardinality of $\Omega_{m,r}$
}\label{sec:Omega}

To proceed with the proof of Theorem \ref{mainthm}, we need to know how many intervals $J\in \Omega_m$ assume a given value of $r(J)$. For $m\in\mathbb N$ and $r\in\{0,...,\min(m,n-1)\}$ let $$\Omega_{m,r}= \{J\in \Omega_m\,|\, r(J) = r\}.$$
Denote the cardinality of $\Omega_{m,r}$ by $N_r(m)$. This is determined in Lemma~\ref{$N_s$} below using  the following considerations.

We have
$$N_0(m+1) =  \sum_{r=0}^ {\min(m,n-1)} N_r(m)$$  
and for $1\le r \le \min(m+1,n-1)$
$$N_r(m+1) = N_{r-1}(m).$$ For $1\le m\le n-1$ we have $N_0(m) = 2^{m-1}$ since $j_m=0$ and $j_k$ can be arbitrary for  $k<m$,  and for $2\le m\le n- 1$, $N_1(m) = 2^{m-2}$ since $j_m=1$, $j_{m-1} = 0$ and the remaining $j_k$ are arbitrary.  Similarly if $  r+1 \le m \le n-1$, $N_r(m) = 2^{m-(r+1)}$.  Finally $N_m(m) = 1$. 

Define  a matrix $M$ such that the equation 
\begin{equation} \label{Mdef}
	\begin{bmatrix}
		N_0(m+1) \\
		\vdots \\
		N_{n-1}(m+1)\\
	\end{bmatrix}
	=
	\begin{bmatrix}
		1&1&1& \cdots 1&1\\
		1&0&0& \cdots 0&0\\
		0&1&0& \cdots 0&0\\
		\vdots \\
		0&0&0& \cdots1&0\\
	\end{bmatrix}
	\times
	\begin{bmatrix}
		N_0(m) \\
		\vdots \\
		N_{n-1}(m)\\
	\end{bmatrix}
	=
	M \times 
	\begin{bmatrix}
		N_0(m) \\
		\vdots \\
		N_{n-1}(m)\\
	\end{bmatrix}.
\end{equation}   
holds whenever $m\ge n-1$. Thus
\begin{equation}  \label{findingN_s}
	\begin{bmatrix}
		N_0(m+1) \\
		\vdots \\
		N_{n-1}(m+1)\\
	\end{bmatrix}
	=
	M^{m+1-(n-1)} \times 
	\begin{bmatrix}
		N_0(n-1) \\
		\vdots \\
		N_{n-1}(n-1)\\
	\end{bmatrix} 
	= M^{m+1-(n-1)} \times 
	\begin{bmatrix}
		2^{n-2}\\
		\vdots \\
		2^{n-2-r}\\
		\vdots\\
		1\\
	\end{bmatrix}.
\end{equation}
It is not hard to show using expansion by minors of the first column of $xI -M$ that 
\begin{equation}\label{eq:eigen}
	\det ( xI -M) = (x-1)x^{n-1} - (x^{n-2} + x^{n-3} + \cdots + x + 1)  = x^n - (1 + x + \cdots + x^{n-1}).
\end{equation}
Thus $M$ has a unique positive eigenvalue $\beta$. Using \eqref{betan}, an unnormalized eigenvector corresponding to $\beta$ is seen to be 
\begin{equation}
	u =
	\begin{bmatrix}
		\beta^{n-1}\\
		\beta^{n-2}\\
		\vdots \\
		1\\
	\end{bmatrix}
\end{equation}
and an unnormalized eigenvector with eigenvalue $\beta$ for the transposed matrix $M^*$ is 
\begin{equation} \label{v}
	v= 
	\begin{bmatrix}
		1\\
		\beta-1 \\
		\beta^2 - \beta -1\\
		\vdots \\
		\beta^{n-1} -(\beta^{n-2} + \cdots + 1)
	\end{bmatrix}.
\end{equation}

The components of \eqref{v} all have the same sign.  In fact, for $j=1,...,n$,
\begin{equation} \label{identity}
	\beta^{j-1} - (\beta^{j-2} + 
	\beta^{j-3} + \cdots + 1) = \beta^{-1} + \beta^{-2} + \cdots + \beta^{-(n-(j-1))}.   
\end{equation}
To see this, subtract the right hand side from the left, factor out $\beta^{j-1}$ and use \eqref{betan}.

Let $\langle v,u\rangle $ be the usual inner product and define 
\begin{equation} \label{D}
	D(\beta) = \beta^{-(n-1)} \langle v,u\rangle  
\end{equation}
where $D(\beta)$ is the same constant as in  \eqref{fbeta1}, which can be verified by direct computation. Moreover, we calculate
$$ \langle v,u\rangle = n\beta^{n-1} -  \sum_{m=0}^{n-2} (m+1) \beta^m =  \frac{\mathrm d}{\mathrm dx}(x^n -x^{n-1} -x^{n-2} - \cdots - 1)\bigg|_{x=\beta} = \prod_{j=2}^{n} (\beta - \lambda_j)$$
where the $\lambda_j$'s are the roots of $x^n - (x^{n-1} + \cdots + 1)$.  We order the $\lambda_j$'s so that $\beta = \lambda_1 > |\lambda_2| \ge \cdots \ge |\lambda_{n}| $.  By \eqref{eq:eigen}  these are  the eigenvalues of $M$.

Let 
\begin{equation}
	w = 
	\begin{bmatrix}
		2^{n-2}\\
		\vdots \\
		2^{n-2-r}\\
		\vdots\\
		2^{n-n}\\
		1\\
	\end{bmatrix}.
\end{equation}

\begin{lemma} \label{vw} We have %For $n\in\{2,3,...\}$,
	\begin{equation}
		\langle v,w\rangle = \beta^{n-1}.  
	\end{equation}  
\end{lemma}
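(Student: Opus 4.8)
The plan is to recognize $\langle v,w\rangle$ as the total length of the intervals $I_{J,n-1}$ and then invoke Proposition~\ref{prop:IJ}.

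First I would rewrite the entries of $v$. By the identity \eqref{identity}, the $j$-th component of $v$ equals $\beta^{-1}+\beta^{-2}+\cdots+\beta^{-(n-j+1)}$, which is exactly the quantity $K(J)=\beta^{-1}+\cdots+\beta^{-(n-r(J))}$ for any $J$ with $r(J)=j-1$. Writing $K_r:=\beta^{-1}+\cdots+\beta^{-(n-r)}$, this says that the $(r+1)$-th component of $v$ is $K_r$ for $r=0,\ldots,n-1$. Next I would note that $w$ is precisely the base vector $[N_0(n-1),\ldots,N_{n-1}(n-1)]^T$ appearing on the right-hand side of \eqref{findingN_s}: indeed the counts recorded just before \eqref{Mdef} give $N_r(n-1)=2^{n-2-r}$ for $r\le n-2$ and $N_{n-1}(n-1)=1$, which match the entries of $w$.

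Combining these two observations,
$$\langle v,w\rangle=\sum_{r=0}^{n-1}K_r\,N_r(n-1)=\sum_{r=0}^{n-1}\sum_{J\in\Omega_{n-1,r}}K(J)=\sum_{J\in\Omega_{n-1}}K(J),$$
since $K(J)$ depends on $J$ only through $r(J)$ and $\Omega_{n-1}$ is partitioned by the sets $\Omega_{n-1,r}$. Finally, by \eqref{IJdef} the interval $I_{J,n-1}$ has length $\beta^{-(n-1)}K(J)$, and by parts 1) and 2) of Proposition~\ref{prop:IJ} the intervals $\{I_{J,n-1}\}_{J\in\Omega_{n-1}}$ form a partition of $[0,1)$; summing lengths gives $\sum_{J\in\Omega_{n-1}}\beta^{-(n-1)}K(J)=1$, hence $\sum_{J\in\Omega_{n-1}}K(J)=\beta^{n-1}$. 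Together with the previous display this yields $\langle v,w\rangle=\beta^{n-1}$.

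I do not expect a genuine obstacle here; the only care needed is in the index bookkeeping (the components of $v$ are labelled $1,\ldots,n$ while the run-length $r$ ranges over $0,\ldots,n-1$) and in verifying that the entries of $w$ are indeed the $N_r(n-1)$. As an alternative, the identity can be proved by brute force: substitute $v_j=\beta^{j-1}-\beta^{j-2}-\cdots-1$ together with the explicit powers of $2$ in $w$, sum the resulting geometric sums in the ratios $\beta/2$ and $1/2$, and reduce using \eqref{betan}. The partition argument is shorter and conceptually cleaner, so I would present that.
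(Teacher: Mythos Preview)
Your argument is correct and not circular: identity \eqref{identity} is proved directly from \eqref{betan}, Proposition~\ref{prop:IJ} is established independently in Appendix~A, and the identification of the entries of $w$ with $N_r(n-1)$ is exactly what \eqref{findingN_s} records. The partition argument then yields $\langle v,w\rangle=\beta^{n-1}$ cleanly.

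Your route, however, is genuinely different from the paper's. The paper proceeds by a short induction on $n$, and in fact proves the stronger, purely algebraic statement that for \emph{every} real $x$ (not just the pseudo golden mean) one has $\sum_{j=1}^{n}\bigl(x^{j-1}-(x^{j-2}+\cdots+1)\bigr)w_j=x^{n-1}$; the relation \eqref{betan} is never used. Your proof, by contrast, invokes \eqref{identity} (hence \eqref{betan}) to reinterpret the $j$-th entry of $v$ as $K_{j-1}$, and then reads $\langle v,w\rangle$ as $\beta^{n-1}$ times the total length of the partition $\{I_{J,n-1}\}_{J\in\Omega_{n-1}}$ of $[0,1)$. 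This is conceptually illuminating---it explains \emph{why} the inner product equals $\beta^{n-1}$ in terms already central to the paper---whereas the paper's induction is more elementary and yields a polynomial identity valid for all $x$. Either proof suffices for the lemma as stated; the extra generality in the paper's version is not exploited elsewhere.
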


\begin{proof}
	The lemma is true for $\beta $ replaced by any $x\in \R$  not necessarily satisfying \eqref{betan}.  Thus, for the purpose of this proof, we replace $\beta$ by $x$ in \eqref{v}. First note that the lemma  is true for $n=2$.  Assuming it is true for $n$, we have for $n+1$
	\begin{align*} 
		\langle v,w\rangle =&\, 2[2^{n-2} + (x-1)2^{n-3} +\cdots + (x^{n-2} - (x^{n-3} + \cdots + 1))]  +  x^{n-1} - \\
		&\,(x^{n-2} + \cdots + 1) + x^n - (x^{n-1} + \cdots + 1)\\
		=&\, 2[x^{n-1} - (x^{n-1} - (x^{n-2} + \cdots + 1))] + x^{n-1} -\\ 
		&\,(x^{n-2} + \cdots + 1) + x^n - (x^{n-1} + \cdots + 1)\\
		=&\, x^n.
	\end{align*}
\end{proof}

We can now determine $N_r(m)$ from \eqref{findingN_s}.

\begin{lemma} \label{$N_s$}  We have for  $m\in\{n,n+1,...\}$ and $r\in\{0,...,\min(m,n-1)\}$ that
	$$N_r(m) = \beta^{m +n-1-r} \langle v,w\rangle ^{-1} + O(|\lambda_2|^{m}).$$
\end{lemma}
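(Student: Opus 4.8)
The plan is to extract the stated asymptotics directly from the matrix power formula \eqref{findingN_s}. Since $M$ has $n$ eigenvalues $\beta=\lambda_1>|\lambda_2|\ge\cdots\ge|\lambda_n|$ (by \eqref{eq:eigen}), and $\beta$ is simple, I would write the spectral decomposition of $M^k$ as $M^k = \beta^k\, \Pi + R^k$, where $\Pi$ is the rank-one projection onto the $\beta$-eigenline along the complementary invariant subspace, and $R$ is the restriction of $M$ to that complement, so that $\|R^k\| = O(k^{n-2}|\lambda_2|^k)$ by the Jordan form bound; absorbing the polynomial factor into a slightly larger base (or simply noting that the statement only claims $O(|\lambda_2|^m)$, which we may as well read with a harmless polynomial slack, or better: since the problem only needs $|\lambda_2|<1$, one can replace $|\lambda_2|$ here by any $|\lambda_2|<\rho<1$ and the bound still reads $O(|\lambda_2|^m)$ after noting $k^{n-2}|\lambda_2|^k = O((|\lambda_2|+\varepsilon)^k)$) gives the error term. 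The rank-one part is $\Pi = u v^{*}/\langle v,u\rangle$, since $u$ is the right $\beta$-eigenvector and $v$ the left $\beta$-eigenvector (from \eqref{v}), with the pairing $\langle v,u\rangle\neq 0$ already computed above as $\prod_{j=2}^n(\beta-\lambda_j)$.

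Next I would apply this to \eqref{findingN_s}: with $k = m+1-(n-1) = m-n+2$ and the initial vector $w$ (note the vector in \eqref{findingN_s} is exactly $w$), we get
$$
\begin{bmatrix} N_0(m+1)\\ \vdots\\ N_{n-1}(m+1)\end{bmatrix}
= \beta^{\,m-n+2}\,\frac{\langle v,w\rangle}{\langle v,u\rangle}\,u + O(|\lambda_2|^{m}).
$$
Reading off the $(r+1)$-st component of $u$, which is $\beta^{\,n-1-r}$, gives $N_r(m+1) = \beta^{\,m-n+2}\,\beta^{\,n-1-r}\,\langle v,w\rangle/\langle v,u\rangle + O(|\lambda_2|^m) = \beta^{\,m+1-r}\,\langle v,w\rangle/\langle v,u\rangle + O(|\lambda_2|^m)$. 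Shifting the index $m+1\mapsto m$ yields $N_r(m) = \beta^{\,m-r}\,\langle v,w\rangle/\langle v,u\rangle + O(|\lambda_2|^m)$. Finally I would invoke Lemma~\ref{vw}, which says $\langle v,w\rangle = \beta^{\,n-1}$, to rewrite the coefficient: $\beta^{\,m-r}\langle v,w\rangle/\langle v,u\rangle = \beta^{\,m+n-1-r}/\langle v,u\rangle$. This is exactly the claimed formula $N_r(m) = \beta^{\,m+n-1-r}\langle v,w\rangle^{-1}+O(|\lambda_2|^m)$ once one observes $\langle v,u\rangle$ and $\beta^{n-1}/\langle v,w\rangle\cdot\langle v,w\rangle$ match — i.e. since $\langle v,w\rangle=\beta^{n-1}$, the two displayed forms $\beta^{m+n-1-r}\langle v,w\rangle^{-1}$ and $\beta^{m-r}\langle v,w\rangle/\langle v,u\rangle$ agree provided $\langle v,u\rangle=\langle v,w\rangle=\beta^{n-1}$; so in fact the cleanest route is to check directly that $\langle v,u\rangle = \langle v,w\rangle$, or simply present the bound with the coefficient $\langle v,w\rangle^{-1}\beta^{n-1-r}$ and note $\langle v,w\rangle=\beta^{n-1}$ makes it $\beta^{-r}\cdot(\text{stuff})$ — I would reconcile the normalization carefully in the write-up.

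The main obstacle is bookkeeping rather than anything deep: one must (a) make sure the range of validity $m\ge n$ lines up with the requirement $m\ge n-1$ under which \eqref{Mdef}, hence \eqref{findingN_s}, holds, and (b) handle the polynomial-in-$k$ factor that can appear in $\|R^k\|$ if $\lambda_2$ is a repeated eigenvalue or has nontrivial Jordan block — this is dealt with by the standard remark that $k^{d}|\lambda_2|^k = O(\rho^k)$ for any $\rho\in(|\lambda_2|,1)$, and since the application in later sections only needs some base strictly below $1$, nothing is lost; alternatively, if it is known (as Appendix~B presumably shows) that $\lambda_2$ is simple or at least that $M$ is diagonalizable, the $O(|\lambda_2|^m)$ is literal. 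The one genuinely necessary input beyond linear algebra is Lemma~\ref{vw}, already proved, which pins down the constant $\langle v,w\rangle=\beta^{n-1}$.
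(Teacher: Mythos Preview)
Your approach is essentially the paper's own: apply the matrix power formula \eqref{findingN_s}, decompose $M^{m-(n-1)}$ spectrally, and read off the $r$-th component using $(u)_r=\beta^{n-1-r}$ and Lemma~\ref{vw}. The paper streamlines your step~(b) by invoking the fact (cited from \cite{MI}, and stated in Appendix~B) that \emph{all} eigenvalues of $M$ are simple; hence $M$ is diagonalizable, $w=\sum_j \langle v_j,w\rangle\langle v_j,u_j\rangle^{-1}u_j$, and the error term is a finite sum of terms of size $|\lambda_j|^{m-(n-1)}$, giving a literal $O(|\lambda_2|^m)$ with no polynomial slack.

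Your confusion at the end, however, is not a bookkeeping slip on your part: the displayed statement of the lemma contains a typo. The constant should be $\langle v,u\rangle^{-1}$, not $\langle v,w\rangle^{-1}$. Indeed your (and the paper's) computation gives
\[
N_r(m)=\beta^{m-(n-1)}\,\frac{\langle v,w\rangle}{\langle v,u\rangle}\,(u)_r + O(|\lambda_2|^m)
=\beta^{m+n-1-r}\langle v,u\rangle^{-1}+O(|\lambda_2|^m),
\]
and this is precisely the form in which the lemma is \emph{used} later in the proof of Proposition~\ref{gm-fbetaest}. Do not try to salvage the printed statement by arguing $\langle v,u\rangle=\langle v,w\rangle$: that is false (for $n=2$ one has $\langle v,u\rangle=2\beta-1\neq\beta=\langle v,w\rangle$). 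Simply write the result with $\langle v,u\rangle^{-1}$ and move on.
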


\begin{proof}
	We use the fact that the eigenvalues of $M$ are simple, see \cite{MI}.  Thus $M$ has a basis of eigenvectors, $Mu_j = \lambda_j u_j, 1\le j\le n, u_1 =u$, and similarly $M^*$ has a basis of eigenvectors, $M^*v_j = \bar {\lambda_j}v_j, 1\le j\le n, v_1 = v$.  
	%It follows that $\langle v_j,u_j\rangle \ne 0$ and $\langle v_j,u_k\rangle = 0$ if $j\ne k$. 
	%We may write $w = \sum_{j=1}^n \langle v_j,w\rangle\,\langle v_j,u_j\rangle^{-1} u_j$.  
We have $\langle v_j,u_k\rangle= 0$ if $j\ne k$, since $(\lambda_k - \lambda_j)\langle v_j,u_k\rangle = \langle v_j,Mu_k\rangle - \langle M^*v_j,u_k\rangle = 0$. Thus 
if $w=\sum_{j=1}^n \alpha_ju_j$ we have $\langle v_j,w\rangle = \alpha_j \langle v_j,u_j\rangle$ or 
$$w= \sum_{j=1}^n \langle v_j,w\rangle\,\langle v_j,u_j\rangle^{-1}u_j.$$		
	It follows that  
	\[N_r(m)= \sum_{j=1}^{n} (M^{m-(n-1)} \langle v_j,w\rangle\,\langle v_j,u_j\rangle^{-1} u_j)_r 
	=\beta^{m-(n-1)}\langle v,w\rangle\,\langle v,u\rangle^{-1}(u)_r + O(|\lambda_2|^{m}).\]
	Using $\langle v,w\rangle = \beta^{n-1}$ and $ (u)_r = \beta^{n-1 -r}$, we find the result.
\end{proof}

\section{Proof of main theorem}\label{sec:proof}

We need the following alternative expression for $f_\beta$.

\begin{lemma}
	With $f_\beta$ as in \eqref{fbeta1},  we have for $0\le x<1$
	\begin{equation} \label{fbeta}
		f_\beta(x) = \sum_{s=0}^{n-1}\chi_{[0,\beta^{-1} + \cdots + \beta^{-(n-s)})}(x)\beta^{-s}D(\beta)^{-1} .   
	\end{equation}   
\end{lemma}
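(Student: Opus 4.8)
The plan is to check that the two expressions for $f_\beta$ are identical as step functions on a common partition of $[0,1)$, using only the algebraic identity \eqref{identity} together with the defining relation \eqref{betan}; since $D(\beta)^{-1}$ is a common factor, it suffices to prove $\sum_{j=1}^n F_j(x) = \sum_{s=0}^{n-1}\chi_{[0,\beta^{-1}+\cdots+\beta^{-(n-s)})}(x)\,\beta^{-s}$ for $0\le x<1$.

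First I would introduce $a_k = \beta^{-1}+\cdots+\beta^{-k}$ for $0\le k\le n$, with the convention $a_0=0$, and note that dividing \eqref{betan} by $\beta^n$ yields $a_n=1$. Since $0=a_0<a_1<\cdots<a_n=1$, the half-open intervals $[a_{j-1},a_j)$ with $j=1,\ldots,n$ partition $[0,1)$. The indicator appearing in $F_j$ is precisely $\chi_{[a_{j-1},a_j)}$, and its coefficient $\beta^j-\beta^{j-1}-\cdots-\beta$ is $\beta$ times the left-hand side of \eqref{identity}; hence, multiplying \eqref{identity} by $\beta$, this coefficient equals $1+\beta^{-1}+\cdots+\beta^{-(n-j)}=\sum_{s=0}^{n-j}\beta^{-s}$. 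Thus $\sum_{j=1}^n F_j(x)=\sum_{j=1}^n\left(\sum_{s=0}^{n-j}\beta^{-s}\right)\chi_{[a_{j-1},a_j)}(x)$.

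Next I would rewrite the right-hand side of \eqref{fbeta} on the same partition. For $0\le s\le n-1$ we have $1\le n-s\le n$, so $[0,a_{n-s})=\bigcup_{j=1}^{n-s}[a_{j-1},a_j)$, i.e.\ $\chi_{[0,a_{n-s})}=\sum_{j=1}^{n-s}\chi_{[a_{j-1},a_j)}$. Substituting this and interchanging the two finite sums gives $\sum_{s=0}^{n-1}\beta^{-s}\chi_{[0,a_{n-s})}(x)=\sum_{j=1}^n\chi_{[a_{j-1},a_j)}(x)\sum_{s=0}^{n-j}\beta^{-s}$, which is exactly the expression found for $\sum_{j=1}^n F_j(x)$ in the previous step. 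Multiplying through by $D(\beta)^{-1}$ then gives \eqref{fbeta}.

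The argument is entirely elementary and I expect no real obstacle; the only points demanding care are the empty-sum conventions at the ends of the ranges (e.g.\ $F_1=\beta\,\chi_{[0,\beta^{-1})}$, where the subtracted block in the coefficient is empty, and $s=0$, where $\chi_{[0,a_n)}=\chi_{[0,1)}$), the correct reading of the descending block $\beta^{j-1}+\cdots+\beta$ in the definition of $F_j$, and the identification $a_n=1$ coming from \eqref{betan}, which is what makes the intervals $[a_{j-1},a_j)$ actually exhaust $[0,1)$.
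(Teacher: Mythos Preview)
Your proof is correct and follows essentially the same approach as the paper: both arguments decompose the step functions on the common partition $[a_{j-1},a_j)$ with $a_k=\beta^{-1}+\cdots+\beta^{-k}$, and both use the identity \eqref{identity} (in your case its $\beta$-multiple) to match the coefficient of $F_j$ with $\sum_{s=0}^{n-j}\beta^{-s}$. Your version is slightly more explicit in naming the partition points and handling the edge cases, but the logical content is the same.
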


\begin{proof}
	We have 
	\[\sum_{s=0}^{n-1}\chi_{[0,\beta^{-1} + \cdots + \beta^{-(n-s)})}\beta^{-s}
	=\sum_{j=1}^{n}(1+ \beta^{-1} + \cdots + \beta^{-(n-j)})\chi_{[\beta^{-1} + \cdots + \beta^{-(j-1)}, \beta^{-1} + \cdots + \beta^{-j})}\]
	where the term for $j=1$ is interpreted as $(1+ \beta^{-1} + \cdots + \beta^{-(n-1)})\chi_{[0,\beta^{-1})}$.
	From \eqref{identity}, $$1+ \beta^{-1} + \cdots + \beta^{{-(n-j)}} = \beta^j-(\beta^{j-1} + \cdots + \beta),$$ which gives the result.
\end{proof}

In case $f$ is not Lipschitz we will use an approximation $g$ which is a Lipschitz continuous PDF on $[0,1)$ with $|g(x) - g(y)| \le L_g |x-y|$.   For $0\le x<1$ and $m\in\mathbb N$ we define
\begin{equation}\label{defgm}
	g_m(x) = \beta^{-m} \sum_{J\in \Omega_m} \chi_{[0,K(J))}(x) g(L_{J,m} + \beta^{-m}x).
\end{equation}
The proposition below is later used to show \eqref{fm-fbeta} in Theorem \ref{mainthm}.

\begin{proposition} \label{gm-fbetaest}
	With $f_\beta $ and $g_m$ defined in \eqref{fbeta1} and \eqref{defgm}, respectively,  if $m>2n$ then
	\begin{equation} \label{gm-fbeta}
		\|g_m - f_\beta\|_\infty = O(\beta^{-m_1})L_g + O((\beta^{-1}|\lambda_2|)^{m_2})
	\end{equation}
	whenever $m_1,m_2 \in \mathbb{N}$ with $m_2 > 2n-1$, $m_1\ge n-1$ and $m_1 +m_2 = m$, and where the $O$-terms do not depend on $g$.
	
\end{proposition}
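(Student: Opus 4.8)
The plan is to start from the explicit formula \eqref{defgm} for $g_m$ and compare it termwise to the expression \eqref{fbeta} for $f_\beta$. Fix $x\in[0,1)$. For each $J\in\Omega_m$ with $x<K(J)$, i.e.\ $x$ in the support of the $J$-term, I would like to replace $g(L_{J,m}+\beta^{-m}x)$ by a value that does not depend on $x$; the natural choice is $g$ evaluated at some point of $I_{J,m}$, say $g(L_{J,m})$. The Lipschitz bound gives $|g(L_{J,m}+\beta^{-m}x)-g(L_{J,m})|\le L_g\beta^{-m}$, so the total error incurred in this replacement is at most $L_g\beta^{-m}\cdot N(m)$ where $N(m)=|\Omega_m|=\sum_r N_r(m)=O(\beta^m)$ by Lemma~\ref{$N_s$}. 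That already contributes $O(1)L_g$, which is far too weak; the point is that we must only do this replacement for the ``short'' prefixes, i.e.\ split $m=m_1+m_2$, write $J=(J',J'')$ with $J'\in\Omega_{m_1}$ and $J''$ the last $m_2$ digits, and note $L_{J,m}=L_{J',m_1}+\beta^{-m_1}L_{J'',m_2}$. For fixed $J'$, summing $g(L_{J',m_1}+\beta^{-m_1}(\,\cdot\,))$ over the admissible completions $J''$ is essentially $g_{m_2}$ of a rescaled argument, up to boundary corrections coming from the constraint that $J$ (not just $J''$) be admissible and from the dependence of $K(J)$ on the tail of $J$. The first, genuinely Lipschitz-dependent, error then comes only from replacing $g(L_{J',m_1}+\beta^{-m_1}y)$ by $g(L_{J',m_1})$ for the $O(\beta^{-m_1})$-length outer block: there are $N_0(m_1)=O(\beta^{m_1})$ choices of $J'$, each contributing $O(\beta^{-m})$ after the $\beta^{-m}$ prefactor and the Lipschitz estimate $L_g\beta^{-m_1}$ on the oscillation; but the correct bookkeeping is that the number of inner completions attached to a given $J'$ is $O(\beta^{m_2})$, so the $J'$-block contributes $\beta^{-m}\cdot O(\beta^{m_2})\cdot L_g\beta^{-m_1}=O(\beta^{-m_1})L_g$ after summing over the $O(\beta^{m_1})$ outer blocks — this is the first $O$-term in \eqref{gm-fbeta}.

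Having frozen the outer block, what remains is a sum over inner completions $J''$ whose leading term should reproduce $f_\beta(x)$. Concretely, for fixed $J'$ I expect
$$\beta^{-m_2}\sum_{J''}\chi_{[0,K(J))}(x)\;=\;f_\beta(x)\,\langle \text{appropriate normalization}\rangle + \text{error},$$
and the error should be governed by the subdominant spectral behaviour of the transfer matrix $M$. The clean way to see this is to express the inner sum in terms of the counts $N_r(m_2)$ of Section~\ref{sec:Omega}: grouping the inner completions by $r=r(J'')$ (equivalently by which indicator cutoff $K(J)=\beta^{-1}+\cdots+\beta^{-(n-r)}$ applies, since $r(J)=r(J'')$ once $m_2\ge n-1$, which holds because $m_2>2n-1$) gives
$$\beta^{-m}\sum_{J'}\;\sum_{s=0}^{\min(m_2,n-1)} N_s(m_2)\,\chi_{[0,\,\beta^{-1}+\cdots+\beta^{-(n-s)})}(x)\,g(L_{J',m_1}).$$
Now substitute Lemma~\ref{$N_s$}, $N_s(m_2)=\beta^{m_2+n-1-s}\langle v,w\rangle^{-1}+O(|\lambda_2|^{m_2})$, and Lemma~\ref{vw}, $\langle v,w\rangle=\beta^{n-1}$, so the main term is $\beta^{m_2-s}$; factoring this out, the main term becomes
$$\Bigl(\beta^{-m_1}\sum_{J'} g(L_{J',m_1})\Bigr)\sum_{s=0}^{n-1}\beta^{-s}\chi_{[0,\beta^{-1}+\cdots+\beta^{-(n-s)})}(x),$$
whose $x$-dependent factor is exactly $D(\beta) f_\beta(x)$ by \eqref{fbeta}. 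The remaining scalar $\beta^{-m_1}\sum_{J'}g(L_{J',m_1})$ is a Riemann-type sum for $\int_0^1 g=1$ over the partition $\{I_{J',m_1}\}$ of $[0,1)$ (Proposition~\ref{prop:IJ}), so it equals $1+O(\beta^{-m_1})L_g$, and one checks the multiplicative constant matches $D(\beta)^{-1}$ — precisely the statement that $f_{m_1}\to f_\beta$ at the crude level, or more simply that $\int g_{m}=1$. The error terms are: $\beta^{-m}\cdot O(\beta^{m_1})\cdot O(|\lambda_2|^{m_2})\cdot\|g\|_\infty=O((\beta^{-1}|\lambda_2|)^{m_2})$ from the $N_s$-remainder (using $\|g\|_\infty$ bounded uniformly, say by $1+L_g$ — but to keep the $O$-terms independent of $g$ one should bound $\|g\|_\infty$ by an absolute constant, which is legitimate since $g$ ranges over mollifications of densities and we may assume $\|g\|_\infty\le C$); plus the $O(\beta^{-m_1})L_g$ already identified; plus lower-order boundary corrections from admissibility of the concatenation $J=(J',J'')$ and from the at most $n$ values of $x$ near the cutoffs $\beta^{-1}+\cdots+\beta^{-(n-s)}$, all of which are $O(\beta^{-m_2})$ or better and hence absorbed.

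The main obstacle I anticipate is the bookkeeping around admissibility of the concatenation: not every $(J',J'')\in\Omega_{m_1}\times\Omega_{m_2}$ lies in $\Omega_m$ — one must forbid a run of $n-1$ ones straddling the junction, and one must handle the ``not ending in $(0,1^{n-1})^\ast$'' clause of the standard restrictions — so the inner sum over $J''$ depends mildly on the suffix $r(J')$ of the outer block, not just on $J'$ through $g(L_{J',m_1})$. I would control this by observing that the discrepancy between ``$J''$ with $(J',J'')$ admissible'' and ``$J''\in\Omega_{m_2}$'' involves only completions whose first $\le n-1$ digits are constrained, hence $O(\beta^{m_2-1})$ many, contributing an extra $O(\beta^{-1})$ relative error inside the main term; iterating or being slightly more careful (peeling off $n-1$ junction digits and summing over their $O(1)$ admissible patterns, each reducing to a genuine $\Omega$-count) turns this into a term of the same shape as those above. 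The hypotheses $m>2n$, $m_2>2n-1$, $m_1\ge n-1$ are exactly what is needed for all these peeling arguments to make sense (e.g.\ $r(J)=r(J'')$, and the Lemma~\ref{$N_s$} asymptotics apply to $N_s(m_2)$). Once this junction analysis is in place, collecting the three error contributions yields \eqref{gm-fbeta}.
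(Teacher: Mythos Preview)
Your two–scale strategy and the Lipschitz step match the paper's, but the main-term computation has a real gap. The admissibility constraint at the junction is \emph{not} a lower-order correction: for fixed $J'\in\Omega_{m_1}$ with $r(J')=r_1$, the number of $J\in\Omega_m(J')$ with $r(J)=s$ is not $N_s(m_2)$ but $\sum_{k=1}^{n-r_1}N_s(m_2-k)$ (enumerate by the position of the first $0$ after position $m_1$), and by Lemma~\ref{$N_s$} the ratio of these two counts is asymptotically $K(J')=\sum_{k=1}^{n-r_1}\beta^{-k}$, which ranges from $\beta^{-1}$ up to $1$. So the discrepancy is $O(1)$, not $o(1)$. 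This is exactly what your ``Riemann sum'' is missing: the quantity $\beta^{-m_1}\sum_{J'\in\Omega_{m_1}}g(L_{J',m_1})$ is not a Riemann sum for $\int_0^1 g$, since the partition intervals have length $|I_{J',m_1}|=\beta^{-m_1}K(J')$, not $\beta^{-m_1}$; for $g\equiv 1$ your prefactor equals $\beta^{-m_1}|\Omega_{m_1}|\sim\beta^{n}\langle v,u\rangle^{-1}=\beta D(\beta)^{-1}$, so your ``main term'' comes out as $\beta f_\beta(x)$ rather than $f_\beta(x)$. The missing $K(J')$ from the junction count is precisely what restores the correct Riemann weights. The paper makes this the heart of the computation (not an afterthought): it conditions on the first post-junction $0$, obtains the factor $K(J')$, and this cancels against the $K(J')^{-1}$ coming from $|I_{J',m_1}|^{-1}$, leaving $\sum_{J'}\int_{I_{J'}}g=1$ exactly.

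There is a second, smaller problem. By freezing $g$ at the point $L_{J',m_1}$ rather than replacing it by the average $|I_{J',m_1}|^{-1}\int_{I_{J',m_1}}g$, the error coming from the $O(|\lambda_2|^{m_2})$ remainder in Lemma~\ref{$N_s$} carries a factor $\sum_{J'}|g(L_{J',m_1})|$, hence $\|g\|_\infty$, and the second $O$-term in \eqref{gm-fbeta} would then depend on $g$, contrary to the statement. The paper's integral-average device avoids this, since after the cancellation described above one is left with $\sum_{J'}\int_{I_{J'}}g=1$ multiplying the remainder, an absolute bound. This $g$-independence is actually used downstream in the proof of \eqref{dTV}, where $g$ runs over Lipschitz approximants to a merely integrable $f$.
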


\begin{proof}
	Fix $m_1+m_2 = m$.  Define, for fixed $J'\in \Omega_{m_1}$ with $m> m_1$, the set  $\Omega_m(J') = \{(j_1,j_2,..,j_m) \in \Omega_m \mid (j_1,j_2,..,j_{m_1}) =J'\}$. The idea is to localize to subintervals $I_{J}$ of the intervals $I_{J'} $ with $ J' \in \Omega_{m_1}$ and $J\in \Omega_m(J')$. 
	Note that $$\sum_{{J'}\in \Omega_m}h(J') = \sum_{J'\in \Omega_{m_1}}\sum_{J\in \Omega_m(J')}h(J).$$
	Thus from \eqref{defgm}
	\begin{equation}
		g_m(x) = \sum_{J'\in \Omega_{m_1}}\sum_{J\in \Omega_m(J')}\chi_{[0,K(J))}(x)\beta^{-m} g(L_{J,m} + \beta^{-m}x).
	\end{equation}
	We have $I_{J,m} \subset I_{J',m_1}$ and $I_{J',m_1} = \cup_{J\in \Omega_m(J')} I_{J,m}$ by 4) in Proposition \ref{prop:IJ}. For $y = L_{J,m} + \beta^{-m}x \in I_{J,m}$ and $t\in I_{J',m_1}$ we have $|y-t| \le |I_{J',m_1}| = K(J')\beta^{-m_1}$.   
	The cardinality of the set $\Omega_m$ is $N_0(m+1)$ which is given by Lemma~\ref{$N_s$}. Thus the cardinality of $\Omega_m$ is $O(\beta^m)$ so that 
	$$g_m(x) = \left[\sum_{J'\in \Omega_{m_1}}\sum_{J\in \Omega_m(J')}\bigg(\chi_{[0,K(J))}(x)\beta^{-m}|I_{J',m_1}|^{-1}\int_{I_{J',m_1}}g(t)\,\mathrm dt\bigg)\right] + O(\beta^{-m_1})L_g$$
	where $O(\beta^{-m_1})$ does not depend on $g$.
	Since $|I_{J',m_1}| = \beta^{-m_1} K(J')$,
	\begin{equation} \label{gm1}
		g_m(x) = \left[\sum_{J'\in \Omega_{m_1}}\sum_{J\in \Omega_m(J')}\bigg(\chi_{[0,K(J))}(x) K(J')^{-1}\beta^{-m+m_1}\int_{I_{J',m_1}}g(t)\,\mathrm dt\bigg)\right] + O(\beta^{-m_1})L_g .
	\end{equation}
	Let $\Omega_{m_1,r_1}$ be the set of $J'\in \Omega_{m_1}$ with $r(J') = r_1$. Each term in \eqref{gm1} depends only on $J$ through $K(J)$ which in turn depends only on $r(J)$. Thus, counting the number of $J$ with $r(J) = s$ and  $J\in \Omega_{m_1}(J')$ for some $J'$ with $r(J') = r_1$, we get 
	\begin{align*} \label{gm}
		& g_m(x) = \bigg[\sum_{r_1=0}^{n-1}\sum_{J'\in \Omega_{m_1,r_1}} \sum_{s=0}^{n-1}\chi_{[0,\beta^{-1} + \cdots + \beta^{-(n-s)})}(x)K(J')^{-1} \times  \\
		&( N_s(m_2-1) + N_s(m_2-2) + \cdots + N_s(m_2 - (n-r_1))) \beta^{-m_2} \int_{I_{J',m_1}} g(t)\,\mathrm dt\bigg ] + O(\beta^{-m_1})L_g 
	\end{align*}
	where for $k=1,...,n-r_1$, $N_s(m_2-k)$ is the number of $J\in\Omega_{m_1}(J')$ starting with $J'$ followed by $k-1$ 1's and ending with $s$ 1's. 
	From Lemma \ref{$N_s$} and since $m>2n$, for $s=0,...,n-1$ and $k=1,...,n-r_1$,
	$$N_s(m_2-k) =  \beta^{m_2-k +n-1-s} \langle v,u\rangle^{-1} + O(|\lambda_2|^{m_2-k}).$$   Thus 
	\begin{align*}&\sum_{s=0}^{n-1}\chi_{[0,\beta^{-1} + \cdots + \beta^{-(n-s)})}(x)K(J')^{-1} \times \\ &\hspace{5mm}(N_s(m_2-1) + N_s(m_2-2) + \cdots + N_s(m_2 - (n-r_1))) \beta^{-m_2} \\
		= \,& K(J')^{-1} \sum_{s=0}^{n-1}\sum_{k=1}^{n-r_1} \beta^{n-1 -s -k}\chi_{[0,\beta^{-1} + \cdots + \beta^{-(n-s)})}(x)\langle v,u\rangle^{-1} + O(|\beta^{-1}\lambda_2|^{m_2}).
	\end{align*}
	We have $\sum_{k=1}^{n-r_1}\beta^{-k} = K(J')$.  Thus 
	\begin{align*}
		&g_m(x) = \\
		& \sum_{J'\in \Omega_{m_1}}\sum_{s=0}^{n-1}\chi_{[0,\beta^{-1} + \cdots + \beta^{-(n-s)})}(x)\beta^{n-1-s} \int_{I_{J',m_1}} g(t)\,\mathrm dt\,\langle v,u\rangle^{-1} + O(|\beta^{-1}\lambda_2|^{m_2}) + O(\beta^{-m_1})L_g\\
		&= \sum_{s=0}^{n-1}\chi_{[0,\beta^{-1} + \cdots + \beta^{-(n-s)})}(x)\beta^{n-1-s}\langle v,u\rangle^{-1} + O(|\beta^{-1}\lambda_2|^{m_2}) + O(\beta^{-m_1})L_g
	\end{align*} 
	where $O(|\beta^{-1}\lambda_2|^{m_2})$ does not depend on $g$.
	Using \eqref{D} and \eqref{fbeta} we have 
	$$g_m(x) = f_\beta(x)  + O(|\beta^{-1}\lambda_2|^{m_2}) + O(\beta^{-m_1})L_g.$$
	This is \eqref{gm-fbeta}. 
\end{proof}

For the proof of the remaining part of Theorem \ref{mainthm}, we will use that 
\begin{equation} \label{Tys}
	d_{TV}(P_m,P_\beta) = \frac{1}{2}\int_{[0,1)} |f_m(x)-f_\beta(x)|  \,\mathrm dx 
\end{equation}
(see \cite{Tysbakov}, Lemma 2.1).
With $T$ as defined in Section~\ref{s:1}, we have 
\begin{equation} \label{L1est}
	\|f_m -g_m\|_{L^1} = \|T^m f-T^m g\|_{L^1} \le \|f-g\|_{L^1}, 
\end{equation}
since $T$  takes PDFs to PDFs and thus has an extension to $L^1([0,1))$ satisfying $\|Th\|_{L^1} \le \|h\|_{L^1}$ for all $h\in L^1([0,1))$.

\begin{proof}[Proof of Theorem \ref{mainthm}]
	Using \eqref{gm-fbeta} and \eqref{L1est} we have $$\|f_m-f_\beta\|_{L^1} \le \|f_m - g_m\|_{L^1} + \|g_m - f_\beta\|_{L^1} \le\|f - g\|_{L^1} + O(\beta^{-m/2})(1+ L_g).$$ 
	Thus $$\limsup_{m\to \infty} \|f_m-f_\beta\|_{L^1} \le \|f-g\|_{L^1},$$ which can be made as small as we like. Using \eqref{Tys}, this gives
	$$d_{TV}(P_m,P_\beta) \rightarrow 0.$$
	Equation \eqref{fm-fbeta} follows from \eqref{gm-fbeta} setting $f = g$ and $f_m = g_m$.
\end{proof}

%\begin{appendices}
\section*{Appendix A: The beta expansion}%\label{app:A}

In this appendix we prove Proposition \ref{prop:IJ}. We use the notation from the main text. We let $S=(1,\ldots,1,0,1,\ldots,1,0,\ldots)$ denote the sequence having alternatingly $n-1$ 1's in succession and a 0.

\begin{lemma}\label{lem:unique}
	For any sequence of $x_k$'s satisfying the standard restrictions, $\sum_{k=1}^\infty x_k\beta^{-k}$ is a number in $[0,1)$.
	If for $x\in [0,1)$ we let $x_k = \lfloor \beta t^{k-1}(x)\rfloor$, then $x=\sum_{k=1}^\infty x_k\beta^{-k} $ is the unique $\beta$-expansion of $x$ satisfying the standard restrictions. 
\end{lemma}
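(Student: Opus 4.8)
The plan is to prove Lemma~\ref{lem:unique} in two halves: first that any sequence obeying the standard restrictions sums to a point in $[0,1)$, and second that the greedy digits $x_k=\lfloor\beta t^{k-1}(x)\rfloor$ obey the standard restrictions and give the unique such expansion.

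\textbf{Step 1: Admissible sequences land in $[0,1)$.} Nonnegativity is immediate since every term $x_k\beta^{-k}\ge0$ and $x=0$ only for the all-zero sequence, so $x\ge0$. For the upper bound I would bound the tail of any admissible sequence by the ``maximal'' admissible sequence $S=(1,\ldots,1,0,1,\ldots,1,0,\ldots)$ of the lemma's notation. Concretely, I would show by a block argument that for any $m$, $\sum_{k=1}^m x_k\beta^{-k}\le \sum_{k=1}^m s_k\beta^{-k}$ where $s_k$ are the digits of $S$; the point is that among sequences with no $n$ consecutive 1's, the one maximizing every partial sum is obtained by greedily placing $n-1$ ones then a forced zero. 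Then $\sum_{k=1}^\infty s_k\beta^{-k}$ can be summed in closed form: grouping into blocks of length $n$, each block contributes $\beta^{-1}+\cdots+\beta^{-(n-1)}$ times a geometric factor, and using \eqref{betan} (equivalently $\beta^{-1}+\cdots+\beta^{-(n-1)}=1-\beta^{-(n-1)}\cdot$(something), or more cleanly $1+\beta+\cdots+\beta^{n-1}=\beta^n$) one checks the total equals exactly $1$. Since $S$ is excluded (it is exactly the forbidden tail), any admissible sequence differs from $S$ in some digit, and a strict inequality at a finite stage propagates, giving $x<1$.

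\textbf{Step 2: Greedy digits are admissible.} Here I would argue directly from the dynamics of $t$. Since $1<\beta<2$ (Appendix~B), $\lfloor\beta y\rfloor\in\{0,1\}$ for $y\in[0,1)$, so $x_k\in\{0,1\}$. To rule out $n$ consecutive 1's: if $x_k=x_{k+1}=\cdots=x_{k+n-1}=1$, then $t^{k-1}(x)\ge\beta^{-1}+\cdots+\beta^{-n}$, but by \eqref{betan} this quantity equals $1+\beta^{-1}+\cdots+\beta^{-(n-1)}-1\ge1$... more carefully, I would show $\beta^{-1}+\cdots+\beta^{-n}\ge 1$ is false but that the constraint $0\le\beta^{-m}t^m(x)=x-\sum_{k=1}^m x_k\beta^{-k}$ forces, after $n$ ones, a contradiction with $t^{m}(x)<1$. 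For the forbidden tail $S$: if the digit sequence eventually equals $S$, then $t^{j}(x)$ would have to equal the sum $\sum_{k=1}^\infty s_k\beta^{-k}=1$ for some $j$, contradicting $t^j(x)<1$. This is the cleanest route: the single identity ``$S$ sums to $1$'' simultaneously explains why $S$ is the supremum and why it cannot be attained.

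\textbf{Step 3: Uniqueness.} Given that the greedy expansion is admissible, uniqueness among admissible expansions follows from Step 1 applied to a difference: if $x=\sum x_k\beta^{-k}=\sum x_k'\beta^{-k}$ with both admissible and they first differ at index $j$, say $x_j=1,x_j'=0$, then $\beta^{-j}\le\sum_{k>j}x_k'\beta^{-k}\le\sum_{k>j}s_k\beta^{-k}\cdot$(shift), and I would show the right side is $<\beta^{-j}$ because the maximal admissible tail starting at position $j+1$ sums to strictly less than $\beta^{-j}$ — indeed it sums to $\beta^{-j}$ only for $S$, which is excluded. This contradiction gives uniqueness.

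\textbf{Main obstacle.} The delicate point is the block/monotonicity argument in Step 1: making precise that $S$ dominates all admissible partial sums, and that the strict domination is \emph{eventually strict but the gap does not shrink to zero} in the way needed for the $<1$ and uniqueness conclusions. Everything else reduces to the single closed-form identity $\sum_{k\ge1}s_k\beta^{-k}=1$, which is a direct consequence of \eqref{betan} and a geometric series; the bookkeeping of ``where does the first forced zero sit'' (handling the offset $r(J)$ / $n-1-r$ etc.) is the only real calculation, and it is routine once the dominance principle is stated correctly.
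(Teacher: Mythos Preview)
Your proposal is correct and follows essentially the same route as the paper: both compute that the maximal sequence $S$ sums to exactly $1$, use this to show admissible sequences land in $[0,1)$, verify that the greedy digits avoid $n$ consecutive $1$'s and the forbidden tail $S$ via the bound $t^j(x)<1$, and deduce uniqueness by comparing tails at the first index of disagreement. One clarification for your Step~2: dividing \eqref{betan} by $\beta^n$ gives $\beta^{-1}+\cdots+\beta^{-n}=1$ exactly, so $n$ consecutive $1$'s starting at position $k$ already force $t^{k-1}(x)\ge 1$ --- the inequality you momentarily doubted is precisely the contradiction you need, and no further detour is required.
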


\begin{proof}
	To show the first statement, note that the largest value of a $\beta$-expansion without $n$ 1's in succession is $\sum_{i=1}^{n-1} \sum_{l=0}^\infty \beta^{-i-nl} = \sum_{l=0}^\infty\beta^{-nl}(1-\beta^{-n}) =1$  corresponding to the sequence $(x_1,x_2,\ldots)=S$. The $\beta$-expansion of any other sequence of $x_k$'s without $n$ 1's in succession yields a strictly smaller number (indeed, for any such sequence one obtains a strictly larger $\beta$-expansion by swapping $(0,1)$ to $(1,0)$ repeatedly moving all 1's as far to the left as possible and (if there are finitely many 1's) adding extra 1's until $S$ is reached).
	
	Setting $x_k = \lfloor \beta t^{k-1}(x)\rfloor$ we have  $|x-\sum_{k=1}^m x_k\beta^{-k}| = \beta^{-m}t^{m}(x)< \beta^{-m}   $ which goes to zero for $m\to \infty$ showing that  $x =\sum_{k=1}^\infty x_k\beta^{-k}$. This expansion must satisfy the standard restrictions. Indeed, if  $(x_{k+1},\ldots,x_{k+n})=(1,\ldots,1)$ for some $k\ge 0$, then $ t^{k}(x)\ge \sum_{i=1}^n x_{k+i} \beta^{-i} = 1$,  which is a contradiction. Similarly, if $(x_{k+1},\ldots)=S $, then $ t^{k}(x)= \sum_{i=1}^{n-1} \sum_{l=0}^\infty \beta^{-i-nl}= \sum_{l=0}^\infty \beta^{-nl}(1-\beta^{-n}) = 1$ which is again a contradiction.

	To show uniqueness, suppose $\sum_{k=1}^\infty x_k \beta^{-k} = \sum_{k=1}^\infty y_k \beta^{-k}$ and let $m$ be minimal with $x_m\neq y_m$, say $x_m=0$ and $y_m=1$. Then $\sum_{k=1}^\infty x_{m+k} \beta^{-k} =1+ \sum_{k=1}^\infty y_{k+m} \beta^{-k}$. By the above considerations, the only way this can hold is if $(x_{m+1},x_{m+2},\ldots) = S$ and $(y_{m+1},y_{m+2},\ldots) = (0,0,\ldots)$ showing that only the $y_k$'s satisfy the standard restrictions.
\end{proof}

\begin{lemma}\label{lem:lex}
	If $x,y\in [0,1)$ have $\beta$-expansions $x=\sum_{k=1}^\infty x_{k} \beta^{-k}$ and $y=\sum_{k=1}^\infty y_k \beta^{-k}$ following the standard restrictions, then $x<y$ if and only if the sequence $(x_1,x_2,\ldots)$ is lexicographically smaller than $(y_1,y_2,\ldots)$. 
\end{lemma}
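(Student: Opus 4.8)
The plan is to establish the nontrivial (``if'') direction directly from the digit expansions, and then obtain the ``only if'' direction by a short contradiction argument using uniqueness of the expansion (Lemma~\ref{lem:unique}, or part 3) of Proposition~\ref{prop:IJ}).

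First I would treat the ``if'' direction. Suppose $(x_1,x_2,\ldots)$ is lexicographically smaller than $(y_1,y_2,\ldots)$ and let $m\in\mathbb N$ be the first index where the two sequences differ. Since all digits lie in $\{0,1\}$, this forces $x_m=0$, $y_m=1$, and $x_k=y_k$ for $k<m$. Subtracting the two expansions and cancelling the common initial terms gives
$$y-x=\beta^{-m}+\sum_{k>m}y_k\beta^{-k}-\sum_{k>m}x_k\beta^{-k}\ge \beta^{-m}-\beta^{-m}\sum_{j=1}^\infty x_{m+j}\beta^{-j},$$
where I dropped the nonnegative $y$-tail. Hence it suffices to show that $\sum_{j\ge1}x_{m+j}\beta^{-j}<1$ strictly.

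Here is the one place where the standard restrictions genuinely enter. The tail $(x_{m+1},x_{m+2},\ldots)$ again satisfies the standard restrictions: having no more than $n-1$ ones in succession is inherited by any tail of a valid sequence, and whether a sequence ``ends with $S$'' depends only on its tail, so that condition is inherited as well. By the first statement of Lemma~\ref{lem:unique}, $\sum_{j\ge1}x_{m+j}\beta^{-j}$ is therefore a number in $[0,1)$, i.e.\ strictly less than $1$ (the only sequence without $n$ consecutive $1$'s whose $\beta$-expansion equals $1$ is $S$, and $S$ is excluded by the standard restrictions). Substituting this into the display yields $y-x>0$, i.e.\ $x<y$. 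For the converse, assume $x<y$. If $(x_k)$ were \emph{not} lexicographically smaller than $(y_k)$, then either the two sequences coincide, forcing $x=\sum x_k\beta^{-k}=\sum y_k\beta^{-k}=y$, or $(y_k)$ is lexicographically smaller than $(x_k)$, which by the case already proved gives $y<x$; both contradict $x<y$. Hence $(x_k)$ is lexicographically smaller than $(y_k)$, completing the proof.

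The only delicate point is the strict inequality $\sum_{j\ge1}x_{m+j}\beta^{-j}<1$: this is exactly where the ``not ending with $S$'' clause of the standard restrictions is needed, and it is supplied by Lemma~\ref{lem:unique}. Everything else is bookkeeping with geometric tails and the trivial observation that distinct points have distinct (standard) expansions.
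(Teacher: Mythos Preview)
Your proof is correct. It differs from the paper's in which part of Lemma~\ref{lem:unique} does the work. The paper identifies the standard-restriction expansion with the greedy one, so that $x_k=\lfloor\beta t^{k-1}(x)\rfloor$; then at the first index $m$ of disagreement it simply observes that $t^{m-1}(x)<t^{m-1}(y)$ is equivalent to $\lfloor\beta t^{m-1}(x)\rfloor=0$ and $\lfloor\beta t^{m-1}(y)\rfloor=1$, i.e.\ to $x_m=0$, $y_m=1$. Your argument instead bounds $y-x\ge\beta^{-m}\bigl(1-\sum_{j\ge1}x_{m+j}\beta^{-j}\bigr)$ and invokes the \emph{first} statement of Lemma~\ref{lem:unique} (tails under the standard restrictions sum to a value in $[0,1)$) to get strict positivity, then handles the converse by uniqueness. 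The paper's route is a touch shorter and avoids explicitly checking that tails inherit the standard restrictions; your route is more self-contained in that it never needs the floor-function identification, only the elementary tail estimate. Both are clean and complete.
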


\begin{proof}
	Let $m$ be minimal with $x_m\neq y_m$. Then $x<y$ if and only if
	$$t^{m-1}(x)=\sum_{k=1}^\infty x_{k+m-1} \beta^{-k} < \sum_{k=1}^\infty y_{k+m-1} \beta^{-k}=t^{m-1}(y).$$	
	Since $x_m\neq y_m$, this is equivalent to $x_m = \lfloor \beta t^{m-1}(x) \rfloor=0$ and $y_m=\lfloor \beta t^{m-1}(y) \rfloor=1$.
\end{proof}

\begin{lemma}\label{lem:disj}
	If $x\in [0,1)$ has $\beta$-expansion $x=\sum_{k=1}^\infty x_k \beta^{-k}$ following the standard restrictions and $J\in \Omega_m$, then $(x_1,\ldots,x_m)=J$ if and only if $x\in I_J$. 
\end{lemma}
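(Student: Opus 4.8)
The plan is to reduce the ``if and only if'' to a statement about the left endpoint and length of $I_J$, using the lexicographic description of the ordering from Lemma \ref{lem:lex} together with the explicit bounds on $\beta$-expansions that were established in the main text. First I would fix $J=(j_1,\ldots,j_m)\in\Omega_m$ and recall the definitions $L_{J,m}=\sum_{k=1}^m j_k\beta^{-k}$ and $I_J=[L_{J,m},\,L_{J,m}+\beta^{-m}K(J))$ with $K(J)=\beta^{-1}+\cdots+\beta^{-(n-r(J))}$.

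For the forward direction, suppose $(x_1,\ldots,x_m)=J$. Then $x-L_{J,m}=\sum_{k=m+1}^\infty x_k\beta^{-k}=\beta^{-m}t^m(x)\ge 0$, which gives the lower bound $x\ge L_{J,m}$. For the upper bound I would use the fact, stated in the main text and proved in this appendix, that for any $x=\sum x_k\beta^{-k}$ following the standard restrictions with $(x_1,\ldots,x_m)=J$ one has
$$x< L_{J,m}+\beta^{-(m+1)}+\cdots+\beta^{-(m+n-r(J))}=L_{J,m}+\beta^{-m}K(J),$$
so $x\in I_J$. (This is exactly the displayed inequality appearing just before \eqref{IJdef}; I would either invoke it directly or re-derive it here by observing that the tail $(x_{m+1},x_{m+2},\ldots)$ cannot begin with $n-r(J)$ consecutive $1$'s — since $J$ already ends with $r(J)$ ones and the standard restrictions forbid $n$ ones in a row — and cannot equal the extremal pattern that would make the tail sum attain $\beta^{-m}K(J)$, by the same argument as in Lemma \ref{lem:unique}.)

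For the converse, suppose $x\in I_J$; I must show $(x_1,\ldots,x_m)=J$. By part 2) of Proposition \ref{prop:IJ}, or directly by Lemma \ref{lem:unique}, $x$ lies in $I_{J''}$ for $J''=(x_1,\ldots,x_m)\in\Omega_m$, and by the forward direction already proved, $x\in I_{J''}$. If $J''\ne J$, then by part 1) of Proposition \ref{prop:IJ} the intervals $I_J$ and $I_{J''}$ are disjoint, contradicting $x\in I_J\cap I_{J''}$ — but since part 1) is itself part of what Appendix A is proving, I would instead argue directly: if $J''\ne J$, let $J$ be lexicographically smaller, say (the other case is symmetric). Then $L_{J'',m}\ge L_{J,m}+\beta^{-m}$ ... rather, the cleanest route is to note that $J$ lexicographically smaller than $J''$ forces, via Lemma \ref{lem:lex} applied to the finite blocks, that every $y$ with $(y_1,\ldots,y_m)=J''$ satisfies $y> x'$ for every $x'$ with $(x_1',\ldots,x_m')=J$; combined with the forward direction this shows $I_J$ lies entirely to the left of $I_{J''}$, so they cannot share the point $x$. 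Hence $J''=J$.

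\textbf{Main obstacle.} The delicate point is the upper endpoint of $I_J$ in the forward direction: one must know that the tail sum $\sum_{k=m+1}^\infty x_k\beta^{-k}$ is \emph{strictly} less than $\beta^{-m}K(J)$ and not merely $\le$. This requires the observation that the $r(J)$ trailing $1$'s of $J$ cut down the maximal admissible tail to $n-r(J)$ leading $1$'s, and that the ``$S$-type'' extremal tail is excluded by the standard restrictions — essentially a re-run of the maximality argument in Lemma \ref{lem:unique}, carefully bookkeeping the role of $r(J)$. Everything else is lexicographic comparison plus the geometric-series identity \eqref{identity}. Since Lemma \ref{lem:disj} is the key input to Proposition \ref{prop:IJ}, care is needed not to invoke Proposition \ref{prop:IJ} circularly; the argument above is arranged to depend only on Lemmas \ref{lem:unique} and \ref{lem:lex}.
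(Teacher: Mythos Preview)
Your forward direction is fine and matches the paper's: the lower bound is trivial, and the strict upper bound follows because the tail $(x_{m+1},x_{m+2},\ldots)$ can have at most $n-r(J)-1$ leading ones, after which the remaining tail sum is strictly less than $\beta^{-(m+s+1)}$ by Lemma~\ref{lem:unique}.

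The converse, however, has a genuine gap. You write that Lemma~\ref{lem:lex} gives ``every $y$ with $(y_1,\ldots,y_m)=J''$ satisfies $y>x'$ for every $x'$ with $(x_1',\ldots,x_m')=J$; combined with the forward direction this shows $I_J$ lies entirely to the left of $I_{J''}$.'' But the forward direction only tells you that $\{x':(x_1',\ldots,x_m')=J\}\subseteq I_J$, not the reverse inclusion---which is precisely the converse you are trying to prove. From ``every such $y$ exceeds every such $x'$'' you cannot conclude that $y$ exceeds the right endpoint $L_{J,m}+\beta^{-m}K(J)$ of $I_J$ unless you also know that the set $\{x':(x_1',\ldots,x_m')=J\}$ has this right endpoint as its supremum. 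The paper supplies exactly this missing step: it explicitly observes that the right endpoint of $I_J$ is approximated by the $\beta$-expansions of the finite truncations of $(j_1,\ldots,j_m,1,\ldots,1,0,1,\ldots,1,0,\ldots)$ (with $n-r(J)-1$ ones in the first block and $n-1$ ones thereafter), which all have first $m$ digits $J$. Once that is in place, Lemma~\ref{lem:lex} gives $x\ge L_{J,m}+\beta^{-m}K(J)$, i.e.\ $x\notin I_J$, which is the desired contradiction. So the ``main obstacle'' is not the strict inequality in the forward direction (that one is routine from Lemma~\ref{lem:unique}) but rather this endpoint-approximation step in the converse, and your proof plan should be amended to include it.
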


\begin{proof}
	Any tail sequence $(x_{l+1},x_{l+2},\ldots)$ also follows the standard restrictions for any $l\in \mathbb{N}$ and hence $\sum_{k=l+1}^\infty x_k \beta^{-k} <\beta^{-l}$ by Lemma \ref{lem:unique}. Let $r(J)=r$ and let $s\le n-r-1$ be the number of 1's at the beginning of $(x_{m+1},x_{m+2},\ldots)$. Then $\sum_{k=m+1}^\infty x_k \beta^{-k} < \sum_{i=1}^s \beta^{-(m+i)} + \beta^{-(m + s+1 )} \le \sum_{i=1}^{n-r} \beta^{-(m+i)} $ and hence $L_{J,m}\le x < L_{J,m}+K(J)\beta^{-m}$. This shows $x\in I_J$. 
	
	On the other hand assume that $(x_1,\ldots,x_m) =K\neq J=(j_1,\ldots,j_m)$ and let $k$ be minimal with $x_k\neq j_k$. If  $x_k<  j_k$, then $x<L_{J,m}$ by Lemma \ref{lem:lex}. If $j_k<x_k$, then $y<x$ for any $y$ with $(y_1,\ldots,y_m)=J$. We can approximate the upper bound on $I_J$ arbitrarily well by $\beta$-expansions corresponding to finite truncations of the sequence $(j_1,\ldots,j_m,1,\ldots,1,0,1,\ldots,1,\ldots)$ where there are $n-r-1$ 1's in the first sequence of 1's and $n-1$ 1's in all following sequences. Lemma \ref{lem:lex} implies that $x\ge L_{J,m}+\beta^{-m}K(J)$.
\end{proof}

\begin{proof}[Proof of Proposition \ref{prop:IJ}]
	Since all $x\in [0,1)$ have a $\beta$-expansion $x=\sum_{k=1}^\infty x_k\beta^{-k}$ following the standard restrictions by Lemma \ref{lem:unique}, Lemma \ref{lem:disj} shows that $x\in I_J$ for $J=(x_1,\ldots,x_m)$. This shows 2). Since the $\beta$-expansion following the standard restrictions is unique by Lemma \ref{lem:unique}, Lemma \ref{lem:disj} shows 1). Finally 3)  follows from Lemma \ref{lem:unique} and 4) follows from Lemma \ref{lem:disj}. 
\end{proof}

		\section*{Appendix B: Some properties of the roots  of \eqref{eq:eigen}}%\label{app:B}
		
		\begin{lemma} \label{roots}
			Fix $n\ge 2$ and consider the polynomial $p(x)=x^n - (1 + \cdots + x^{(n-1)})$. 
			\begin{itemize} 
				\item[1)] There is a unique positive 
				root $\beta$ of $p$.  This satisfies $\beta \in (1,2)$.
				\item[2)] If $n$ is even there is one negative root.  All other roots are non-real. If $n$ is odd all roots other than $\beta$ are non-real.
				\item[3)] If we label the roots $\lambda_j$ with $|\lambda_1| > |\lambda_2| \ge \cdots \ge |\lambda_n|$, with $\lambda_1 = \beta$, then $|\lambda_j| < 1$ for $j\ge 2$ and $|\lambda_2| \ge \beta^{-1/(n-1)}$ for all $n$.  If $n$ is even then $\lambda_n < 0$ and in fact $|\lambda_n| < |\lambda_{n-1}|$.  We also have $|\lambda_j|\ge 3^{-1/n}$ for $j=1,...,n$. The inequality $|\lambda_2|\ge \beta^{-1/(n-1)}$ is actually an equality for $n=2$ and $3$. 
				\item[4)] All roots are simple.
				\item[5)] $\beta$ is increasing in $n$.  In fact $2-\beta = \beta^{-n}$ so that $\beta$ approaches $2$ exponentially fast as $n$ increases.
			\end{itemize} 
		\end{lemma}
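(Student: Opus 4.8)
The plan is to carry the whole analysis on the auxiliary polynomial
\[ q(x) = (x-1)\,p(x) = x^{n+1} - 2x^n + 1, \]
whose roots are exactly the roots of $p$ together with one extra simple root at $x=1$ (indeed $p(1)=1-n\neq 0$), so that $p$ has simple roots iff $q$ does. Writing $q(x)=x^n(x-2)+1$ we get $q'(x)=x^{n-1}\big((n+1)x-2n\big)$, so on $(0,\infty)$ the function $q$ strictly decreases on $(0,\tfrac{2n}{n+1})$ and strictly increases afterwards; since $q(0)=1>0$, $q(1)=0$, $q(\tfrac{2n}{n+1})<0$, $q(2)=1>0$, it has on $(0,\infty)$ exactly the two zeros $x=1$ and a number $\beta\in(\tfrac{2n}{n+1},2)$, with $q>0$ on $(0,1)\cup(\beta,\infty)$ and $q<0$ on $(1,\beta)$. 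Together with $p(0)=-1\neq 0$ this is part~1. For part~2 one repeats the sign analysis on $(-\infty,0)$: if $n$ is even then $q'>0$ there and $q(-\infty)=-\infty<0<1=q(0)$, so there is exactly one negative root, lying in $(-1,0)$ since $q(-1)=-2<0$; if $n$ is odd then $q(x)=x^{n+1}-2x^n+1>0$ for all $x\le 0$, so $p$ has no nonpositive real root, and a parity count of the non-real roots finishes part~2. Part~4 is then immediate: the zeros of $q'$ are $x=0$ (where $q=1$) and $x=\tfrac{2n}{n+1}$ (where $q<0$), so $q$ and $q'$ share no root and every root of $q$, hence of $p$, is simple.

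For part~3 the key is to locate the non-$\beta$ roots. From $q(\beta)=0$, i.e.\ $\beta^{n}(2-\beta)=1$, dividing by $\beta^n$ gives the identity $2-\beta=\beta^{-n}$ of part~5. More generally every root $\lambda$ of $p$ satisfies $\lambda^{n}(2-\lambda)=1$, so $|\lambda|^n|2-\lambda|=1$; since $|2-\lambda|\le 2+|\lambda|$ this yields $|\lambda|^n\ge(2+|\lambda|)^{-1}$, whence $|\lambda|<1\Rightarrow|\lambda|^n>\tfrac13$, proving $|\lambda_j|\ge 3^{-1/n}$ for all $j$ ($|\lambda_1|=\beta>1$ being clear). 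Next, taking moduli in $\lambda^{n+1}=2\lambda^{n}-1$ gives $|\lambda|^{n+1}\ge 2|\lambda|^n-1$, i.e.\ $q(|\lambda|)\ge 0$, which by the sign pattern above forces $|\lambda|\le 1$ or $|\lambda|\ge\beta$; and $|\lambda|=1$ is impossible, since then $1=|\lambda^{n+1}|=|2\lambda-1|$ forces $\mathrm{Re}\,\lambda=1$, hence $\lambda=1$. To exclude $|\lambda|\ge\beta$ for $\lambda\neq\beta$ I invoke Perron--Frobenius: by \eqref{eq:eigen} the matrix $M$ of \eqref{Mdef} has characteristic polynomial $p$; $M$ is nonnegative, irreducible (its digraph is strongly connected --- from any vertex one descends the subdiagonal to vertex $1$, whence the all-ones first row reaches every vertex) and aperiodic (the loop $M_{11}=1$), hence primitive, so its spectral radius is a simple eigenvalue strictly dominating all others in modulus; since $\beta>0$ is an eigenvalue with the positive eigenvector $u$, that spectral radius is $\beta$. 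Thus $|\lambda_j|<\beta$ for $j\ge 2$, which combined with the dichotomy forces $|\lambda_j|<1$ for $j\ge 2$ (and re-proves that $\beta$ is simple). Finally, $\prod_{j=1}^n|\lambda_j|=1$ (the constant term of $p$ is $-1$), so $\beta\prod_{j\ge 2}|\lambda_j|=1$ and, $|\lambda_2|$ being the largest of $|\lambda_2|,\dots,|\lambda_n|$, $|\lambda_2|^{\,n-1}\ge\prod_{j\ge 2}|\lambda_j|=\beta^{-1}$; for $n=2$ there is a single non-$\beta$ root so this is an equality, and for $n=3$ the two non-$\beta$ roots form a conjugate pair (part~2) and hence have equal modulus, again giving equality.

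For the $n$-even statement in part~3, let $\lambda\sub{neg}\in(-1,0)$ be the unique negative root. Any other root is, by part~2, either $\beta>0$ or non-real, hence not a nonpositive real, so it satisfies $|2-\lambda|<2+|\lambda|$, whereas $|2-\lambda\sub{neg}|=2+|\lambda\sub{neg}|$. If such a $\lambda$ had $|\lambda|\le|\lambda\sub{neg}|$, then $|2-\lambda|<2+|\lambda|\le 2+|\lambda\sub{neg}|=|2-\lambda\sub{neg}|$, so $|\lambda|^n=|2-\lambda|^{-1}>|2-\lambda\sub{neg}|^{-1}=|\lambda\sub{neg}|^n$, a contradiction; hence every root other than $\lambda\sub{neg}$ has strictly larger modulus, so $\lambda_n=\lambda\sub{neg}<0$ and $|\lambda_n|<|\lambda_{n-1}|$. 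For the remaining part of~5, set $\phi_m(x)=2-x-x^{-m}$; as in the analysis of $q$ one has $\phi_m>0$ on $(1,\beta_m)$ and $\phi_m<0$ on $(\beta_m,2)$, and since $\phi_{n+1}(\beta_n)=\beta_n^{-n}-\beta_n^{-(n+1)}=\beta_n^{-(n+1)}(\beta_n-1)>0$ we conclude $\beta_n<\beta_{n+1}$; thus $\beta_n\ge\beta_2=\tfrac{1+\sqrt5}{2}$ and $0<2-\beta_n=\beta_n^{-n}\le\beta_2^{-n}\to 0$ geometrically. The step I expect to be the crux is the strict domination $|\lambda_j|<\beta$ for $j\ge 2$ (which, together with the elementary dichotomy $|\lambda|\le 1$ or $|\lambda|\ge\beta$, is what delivers $|\lambda_j|<1$): this is exactly where the combinatorial positivity of $M$ enters, via Perron--Frobenius. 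If one prefers to keep Appendix~B free of that citation, one can instead set $\mu=1/\lambda$ --- the non-$\beta$ roots of $p$ being the roots $\mu\neq 1$ of $\mu^{n+1}-2\mu+1$ --- and show by Rouch\'e on $|\mu|=r$ with $r<1$ close to $1$, comparing $\mu^{n+1}-2\mu+1$ with $1-2\mu$ and using $r^{n+1}<2r-1$ (valid since $2r-1-r^{n+1}$ vanishes at $r=1$ with derivative $1-n<0$), that exactly one such $\mu$ lies in $|\mu|<1$; the boundary root $\mu=1$ makes this route more delicate than the Perron--Frobenius one.
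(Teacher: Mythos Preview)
Your argument is correct and follows the same route as the paper's --- the auxiliary polynomial $q(x)=x^{n+1}-2x^n+1$, Perron--Frobenius applied to $M$, the identity $2-\beta=\beta^{-n}$, the product-of-roots bound, and the estimate $|\lambda|^n(2+|\lambda|)\ge 1$ --- the main difference being that where the paper cites an external reference for simplicity of the roots and for $|\lambda_j|<1$, you supply both directly (the latter via the clean dichotomy $q(|\lambda|)\ge 0\Rightarrow|\lambda|\le 1$ or $|\lambda|\ge\beta$, then Perron--Frobenius to exclude the second option). One slip to fix: in ruling out $|\lambda|=1$ you write $1=|\lambda^{n+1}|=|2\lambda-1|$, but from $\lambda^{n+1}=2\lambda^n-1$ this should be $|2\lambda^n-1|=1$ (equivalently $|2-\lambda|=1$ from $\lambda^n(2-\lambda)=1$); with $|\lambda^n|=1$ this forces $\mathrm{Re}(\lambda^n)=1$, hence $\lambda^n=1$ and $\lambda=\lambda^{n+1}/\lambda^n=1$, which is not a root of $p$.
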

		
		%\begin{remark}
		%What is missing is a better upper bound for $|\lambda_2|.$  
		%\end{remark}
		
		\begin{proof}
			Much of this is well known, see \cite{Meyer,MI}. 
			We note first that $p(x)$ is the determinant of the matrix $xI-M$,  {cf.\ \eqref{Mdef}}. 
			$M$ has non-negative entries and $M^n$ has all positive entries and therefore it is well known \cite{Meyer} that there is a unique positive root which is strictly larger than the absolute value of all other roots. All roots are simple and $|\lambda_j| < 1$ for $j >1$ (see \cite{MI}). 
			The equation $x^n = 1 + x + \cdots + x^{n-1}$ can be rewritten $1 = x^{-1} + x^{-2} +\cdots + x^{-n} $.  From this equation it is easily seen that the positive solution, $\beta > 1$, increases with $n$. We have $1+ x + \cdots + x^{n-1} = (1- x^n)/(1-x)$ so that we can rewrite the first equation as
			$$x^n = (x^n -1)/(x-1)$$ or 
			$$ x^{n+1} + 1 = 2x^n$$
			which gives $$2-x = x^{-n}.$$
			This shows $\beta < 2$ and since $\beta$ increases with $n$,  $2- \beta $ must decrease exponentially in $n$.
			If we factor  $p(x) = \prod_{j=1}^n (x-\lambda_j)$ we see that $\prod_{j=1}^n\lambda_j = (-1)^{n+1} $ which implies $\beta |\lambda_2|^{n-1} \ge 1$ or $|\lambda_2|\ge \beta^{-1/(n-1)}$.  From the equation $x^{n+1} + 1 = 2x^n$ we find $2|x|^n \ge 1-|x|^{n+1}$ or $|x|^n \ge (2+|x|)^{-1}$.  Applying this to $x = \lambda_j, j\ge 2$, using $|\lambda_j| <1$, we find $|\lambda_j| > 3^{-1/n}$.  In particular all roots other than $\beta$ approach the unit circle as $n$ increases.  
			
			Suppose $x$ is a negative root.  If $n$ is odd we have $-2|x|^n = 1+ |x|^{n+1}$ which is impossible, so  {there are no negative roots if $n$ is odd.  If $n$ is even we have $2|x|^n = 1-|x|^{n+1}$}.  If $\lambda$ is another root then $2|\lambda|^n= |1+\lambda^{n+1}| \ge 1-|\lambda|^{n+1}$.  Subtracting we find $2(|\lambda|^n - |x|^n) \ge |x|^{n+1} - |\lambda|^{n+1}$ and therefore $|\lambda| \ge |x|$.  If $|\lambda| = |x|$ then $2|x|^n = 1-|x|^{n+1}$ implies $2|\lambda|^n = 1-|\lambda|^{n+1} = |1 + \lambda^{n+1}|$. This implies $\lambda^{n+1} < 0$. Since $2\lambda^n = 1+ \lambda^{n+1}$ we must have $\lambda <0$.  Finally this means $\lambda = x$. Thus for $n$  even  there is at most one negative root  and all other roots are strictly larger in absolute value. Moreover, we  know there is  one positive root and as before $\prod_{j=1}^n \lambda_j = (-1)^{n+1}$.  Since all non-real roots come in complex conjugate pairs this shows that if $n$ is even there is precisely one negative root.
			
			If $n=2$ or $3$ we have $\beta \lambda_2 = -1$ and $\beta |\lambda_2|^2 = 1$ respectively,  so in both cases we have $|\lambda_2| = \beta^{-1/(n-1)}.$
		\end{proof}

%\end{appendices}

%\section*{Declarations}
%
%\subsection*{Funding}
%This work was funded by the Danish Council for Independent Research
%Natural Sciences, grant DFF - 10.46540/2032-00005B.
%
%\subsection*{Conflict of interest}
%The authors had no conflict of interest.
%
%\subsection*{Data and materials availability}
%Not applicable
%
%\subsection*{Author contribution}
%All authors contributed equally
%
%\subsection*{Data and materials availability}
%Not applicable

\end{document}